\title{Permutations over cyclic groups}
\author[Zolt\'an L. Nagy]{Zolt\'an L\'or\'ant Nagy}
\thanks{This research was supported by Hungarian National Scientific
Research Funds (OTKA) grant 81310.}
 \newtheorem{theorem}{\em Theorem}[section]
 \newtheorem{lemma}[theorem]{\em Lemma}
 \newtheorem{prop}[theorem]{\em Proposition}
\newtheorem{remark}[theorem]{\em Remark}
\newtheorem{defn}[theorem]{\em Definition}
\begin{document}

\maketitle

\begin{abstract}
Generalizing a result in the theory of finite fields we prove that, apart from a couple of exceptions that can be classified, for any elements $a_1,\dots ,a_m$ of the cyclic group of order $m$, there is a permutation $\pi$ such that $1a_{\pi(1)}+ \cdots +ma_{\pi(m)}=0$. %The proof is based on a previous result of the prime case, then it is extended to the prime power case and to the product of prime powers, a bit similarly to the proof of Erd\H os, Ginzburg and Ziv.
\end{abstract} 

\section{Introduction}

\noindent The starting point of the present paper is the following result of G\'acs, H\'eger, Nagy and P\'alv\"olgyi.
%This paper was motivated by the result of Andr\'as G\'acs, Tam\'as H\'eger, D\"om\"ot\"or P\'alv\"olgyi and Zolt\'an L\'or\'ant Nagy . We start with this result for getting acquaintanced with our problem, and we will generalize it some way. There are some other ways to do that, we will mention other interesting questions in our last section. \\

%, where he determines all sequences $a_1,\dots,a_p$ of integers such that $a_1+\cdots +a_p=p$, $1a_1+2a_2+\cdots +pa_p$ is divisible by $p$, but there exists a permutation $\pi$ of the indices such that $\pi(1)a_1+\pi(2)a_2+\cdots \pi(p)a_p$ is not divisible by $p$ ($p$ is an odd prime). First we will generalize this by omitting the condition on the sum of the $a_i$s, after which the formulation

\begin{theorem}\cite{Gacs}\label{prim} Let $\{ a_1,a_2,\dots ,a_p\}$ be a multiset in the finite field $GF(p)$, $p$ a prime. Then after a suitable permutation of the indices, either $\sum _iia_i=0$, or   
$a_1=a_2=\cdots =a_{p-2}=a$, $a_{p-1}=a+b$, $a_p=a-b$ for field elements $a$ and $b$, $b\ne 0$.
\end{theorem}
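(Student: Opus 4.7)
Throughout, set $S_\pi := \sum_{i=1}^p i\,a_{\pi(i)}$ and $A := \sum_{j=1}^p a_j$, computed in $GF(p)$.

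The opening move is a cyclic shift. Let $\sigma=(1\,2\,\cdots\,p)\in S_p$; a direct re-indexing, combined with $1+2+\cdots+p\equiv 0\pmod p$, yields
\[
S_{\pi\sigma}\equiv S_\pi-A\pmod p.
\]
Iterating, the orbit $\{S_{\pi\sigma^k}:0\le k<p\}$ is an arithmetic progression with common difference $-A$, which exhausts $GF(p)$ as soon as $A\not\equiv 0$. This handles the generic case and reduces attention to $A\equiv 0$.

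Under the hypothesis $A\equiv 0$, I would proceed by the number $d$ of distinct values appearing in the multiset. If $d=1$, then $S_\pi\equiv a\cdot p(p+1)/2\equiv 0$ automatically. If $d=2$ with multiplicities $m$ and $p-m$ for values $u\ne v$, then $A\equiv m(u-v)\not\equiv 0$, contradicting the hypothesis. Thus one may assume $d\ge 3$. I would next invoke the transposition identity: swapping positions $i$ and $j$ alters $S_\pi$ by $(j-i)\bigl(a_{\pi(i)}-a_{\pi(j)}\bigr)$. The pivotal sub-case is the multiplicity pattern $(p-2,1,1)$: writing the values as $a$ (with multiplicity $p-2$) and $u,v$ (singletons), the relation $A=0$ forces $u+v=2a$, i.e.\ $u=a+b$ and $v=a-b$ for some $b\ne 0$. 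A short computation then shows $S_\pi\equiv b(i_1-i_2)\ne 0$, where $i_1,i_2$ denote the positions of $a+b$ and $a-b$; this recovers the exceptional configuration stated in the theorem.

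The technical heart of the plan is to eliminate every other multiplicity pattern: whenever at least two distinct values have multiplicity $\ge 2$, or the multiset contains at least four distinct values, I would show that the achievable values of $S_\pi$ fill all of $GF(p)$. The intended approach is to build up a generating subset of $GF(p)^+$ from the realized ``move sizes'' $(j-i)(v-w)$, verifying that each prospective difference can be implemented by a transposition consistent with the current permutation; an inductive argument on a well-chosen complexity parameter (for instance, $|S_\pi|$ viewed as an integer representative) then closes the case. I expect this bookkeeping --- in particular showing that $(p-2,1,1)$ is the \emph{unique} rigid configuration --- to be the main obstacle. As a backup I would attempt a polynomial-method attack via the Combinatorial Nullstellensatz applied to the product of $\sum_i i\,x_i$ with an appropriate Vandermonde-type factor, which can sometimes bypass the exhaustive case analysis.
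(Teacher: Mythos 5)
Your preliminary reductions are correct and standard: the cyclic-shift identity $S_{\pi\sigma}\equiv S_\pi-A$ disposes of the case $A\not\equiv 0$; the case $d\le 2$ is handled cleanly; and your analysis of the multiplicity pattern $(p-2,1,1)$ correctly recovers the exceptional configuration $\{a,\dots,a,a+b,a-b\}$ with $S_\pi\equiv b(i_1-i_2)\ne 0$. However, everything after that is a plan rather than a proof, and the part you defer is precisely the entire content of the theorem. What must be shown is that for every multiset with $A\equiv 0$ and $d\ge 3$ \emph{not} of type $(p-2,1,1)$, some permutation achieves $S_\pi=0$, and your proposed mechanism --- accumulating ``move sizes'' $(j-i)(v-w)$ into a generating set and inducting on a complexity parameter --- does not obviously close. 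The difficulty is that the set of increments realizable by a transposition depends on the current permutation: after one swap the available positions and value-pairs change, so one cannot freely compose increments, and there is no evident monovariant guaranteeing that the reachable set of values of $S_\pi$ grows until it contains $0$. You yourself flag this as ``the main obstacle,'' which is an accurate self-assessment: the claim that $(p-2,1,1)$ is the unique rigid configuration is exactly what is unproved.

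For comparison, the result is quoted in the present paper from G\'acs, H\'eger, Nagy and P\'alv\"olgyi, and the proof given there is not an elementary case analysis on multiplicity patterns: as the paper remarks, it requires replacing Alon's comparatively simple polynomial argument for additive Latin transversals by a more delicate application of the Combinatorial Nullstellensatz (together with K\'arolyi's inverse-theorem machinery). Your ``backup'' suggestion of a Nullstellensatz attack is therefore the route that actually works, but naming it is not the same as executing it: the choice of polynomial and, above all, the extraction of a nonvanishing coefficient in the presence of the exceptional family is where the real work lies. As it stands, the proposal establishes the easy implications and correctly identifies the exceptional structure, but leaves the central assertion of the theorem unproven.
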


A similar result using a slightly different terminology was obtained by Vinatier \cite{Vin} under the extra assumption that $a_1,\dots ,a_p$, when considered as nonnegative integers, satisfy $a_1+\cdots +a_p=p$.
The former result can be extended to arbitrary finite fields in the following sense.
%We have to mention also that in the paper \cite{Vin} Vinatier proves a similar result (although with a slightly different terminology) with the extra assumption that 

%In the paper of G\'acs et al, the authors prove a much more difficult result replacing $p$ by an arbitrary power of $p$. 

\begin{theorem}\cite{Gacs} \label{primhatvany} Let $\{ a_1,a_2,\dots ,a_q\}$ be a multiset in the finite field $GF(q)$. There are no distinct field elements $b_1,b_2,\dots ,b_q$ such that $\sum _ia_ib_i=0$ if and only if after a suitable permutation of the indices, $a_1=a_2=\cdots =a_{q-2}=a$, $a_{q-1}=a+b$, $a_q=a-b$ for some field elements $a$ and $b$, $b\ne 0$.
\end{theorem}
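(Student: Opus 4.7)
The plan splits by the direction of the equivalence. For the easy direction, assume $a_1 = \cdots = a_{q-2} = a$ and $a_{q-1} = a+b$, $a_q = a-b$ with $b \neq 0$ (and $q > 2$). Since any $q$ distinct elements of $GF(q)$ exhaust the field, $\sum_i b_i = \sum_{x \in GF(q)} x = 0$. A direct computation yields
$$\sum_i a_i b_i \;=\; a\sum_i b_i \;+\; b(b_{q-1} - b_q) \;=\; b(b_{q-1} - b_q) \;\neq\; 0,$$
ruling out any valid sequence.

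For the hard direction, suppose the multiset is not of the exceptional form; we seek distinct $b_i$ with $\sum a_i b_i = 0$. I would first record the reduction that the attainable-sum set $S := \{\sum_i a_i b_i : (b_1, \ldots, b_q)\text{ distinct in }GF(q)\}$ is closed under multiplication by $GF(q)^\times$ (via $b_i \mapsto \lambda b_i$), so $S \in \{\{0\}, GF(q)^\times, GF(q)\}$. A brief transposition argument shows $S = \{0\}$ exactly when all $a_i$ are equal (a non-exceptional configuration already containing $0$); the real task is to exclude $S = GF(q)^\times$ in the non-exceptional case.

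When $q = p$ is prime, the statement follows immediately from Theorem \ref{prim}, since a distinct sequence $b_1, \ldots, b_p \in GF(p)$ exhausts $GF(p)$, which as a multiset coincides with $\{1, 2, \ldots, p\}$ modulo $p$. For the prime power case $q = p^k$ with $k \geq 2$, I would exploit two identities about $c(\sigma) := \sum_i a_i \sigma(i)$ (for bijections $\sigma \colon [q] \to GF(q)$): the swap identity $c(\sigma \circ (i\,j)) - c(\sigma) = (a_i - a_j)(\sigma(j) - \sigma(i))$ for any transposition $(i\,j)$, and the averaging identity $\sum_\sigma c(\sigma) = 0$ (from $\sum_{x \in GF(q)} x = 0$). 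Imposing $c(\sigma) \in GF(q)^\times$ for every bijection, together with these identities, yields rigid combinatorial constraints on $\{a_i\}$; a careful case analysis should pin the multiset (after translating by $a$) onto $\{0, b, -b\}$ with the exceptional multiplicities. A viable alternative route is a character-sum expansion of the solution count, which reduces the problem to estimating permanents of the form $\bigl[\chi(ca_i e_j)\bigr]_{i,j}$ where $\chi$ runs over nontrivial additive characters of $GF(q)$ and $e_1, \ldots, e_q$ enumerates the field.

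The main obstacle I anticipate is the prime power case: the rigidity constraints must be sharp enough to recover precisely the exceptional family and to exclude all near-miss configurations, and this delicate extraction step is where the bulk of the technical work will reside.
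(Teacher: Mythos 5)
Your easy direction is correct, the orbit observation that the attainable-sum set $S$ is a union of $GF(q)^\times$-orbits (hence one of $\{0\}$, $GF(q)^\times$, $GF(q)$) is correct, and the reduction of the prime case $q=p$ to Theorem \ref{prim} is valid, since distinct $b_1,\dots,b_p$ exhaust $GF(p)=\{1,\dots,p\}$ mod $p$. Note, though, that the present paper does not prove Theorem \ref{primhatvany} at all: it is quoted from \cite{Gacs}, and the introduction explicitly records that the proof there required replacing ``the relatively simple approach of Alon'' by a more delicate application of the Combinatorial Nullstellensatz.

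The genuine gap is the entire hard direction for $q=p^k$ with $k\ge 2$, which you leave at the level of ``a careful case analysis should pin the multiset.'' Neither of the two identities you propose to drive that analysis can carry it. The averaging identity is vacuous: for every multiset one has
$$\sum_\sigma c(\sigma)=\sum_i a_i\sum_\sigma \sigma(i)=(q-1)!\,\Bigl(\sum_i a_i\Bigr)\Bigl(\sum_{x\in GF(q)}x\Bigr)=0,$$
exceptional or not, so it imposes no constraint whatsoever on the $a_i$; more generally, counting arguments of this flavour collapse in characteristic $p$. The swap identity only records that $c(\sigma)$ and $c(\sigma)+(a_i-a_j)(\sigma(j)-\sigma(i))$ are simultaneously nonzero, i.e., that a single value is avoided under local transpositions; this is exactly the kind of elementary argument that works for Snevily-type existence statements but that the authors of \cite{Gacs} state they had to abandon in favour of the polynomial method (a Combinatorial Nullstellensatz coefficient computation for a polynomial built from $\prod_{i<j}(x_j-x_i)$ and $\bigl(\sum_i a_ix_i\bigr)^{q-1}$). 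Your character-sum alternative, reducing to permanents of matrices $\bigl[\chi(ca_ie_j)\bigr]$, is likewise only named, not executed, and no method for evaluating or bounding those permanents is indicated. As it stands, the proposal establishes the theorem only for $q$ prime; the extraction of the exceptional structure for genuine prime powers --- which is the substance of the result --- is missing.
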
 

%In the background of these results is  Alon's Combinatorial Nullstellensatz \cite{Alon}, with having 
This theorem can be reformulated in the language of finite geometry and also have an application about the range of polynomials over finite fields. For more details, see \cite{Gacs}.

Our aim is to find a different kind of generalization, more combinatorial in nature, which refers only to the group structure. First we extend the result to cyclic groups of odd order.

%In the forecoming sections, we will prove that a more general result than Theorem \ref{prim} is true for every cyclic (Abelian) group. We should distinguish two cases according to the order of the group is odd or even. We have a set $M$, and we look for the values of $\sum _iia_i$ modulo $m=| M| $ for all permutations of the indices. Especially we want to know whether zero value is taken as a value of a $\sum _iia_i$ $(m)$ \texttt{permutational sum}. Apart from some well described cases, it holds:\\

\begin{theorem}\label{odd} Let $\{ a_1,a_2,\dots ,a_m\}$ be a multiset in the Abelian group $\mathbb{Z}_m=\mathbb{Z}/m\mathbb{Z}$, where $m$ is odd. Then after a suitable permutation of the indices, either $\sum _iia_i=0$, or   
$a_1=a_2=\cdots =a_{m-2}=a$, $a_{m-1}=a+b$, $a_m=a-b$ for elements $a$ and $b$, $(b,m)=1$.
\end{theorem}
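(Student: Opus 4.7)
The plan is to proceed by induction on the number of prime divisors of $m$ (counted with multiplicity), with Theorem~\ref{prim} supplying the base case. A crucial preliminary observation is that for odd $m$, $\sum_{i=1}^{m} i = m(m+1)/2 \equiv 0 \pmod{m}$. This has two immediate consequences: first, translating the whole multiset by any $c\in\mathbb{Z}_m$ leaves the value of $\sum_i i\,a_{\pi(i)}$ unchanged modulo $m$, so we may normalize $A$ by shifting (e.g.\ assume the most frequently occurring value is $0$, or even that $0\in A$); second, the constant multiset $\{a,a,\dots,a\}$ trivially satisfies the conclusion under the identity permutation, which handles the most degenerate case directly.

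For the inductive step, write $m = p n$ with $p$ an odd prime divisor and $n < m$. By CRT the target equation $\sum_i i\,a_{\pi(i)}\equiv 0\pmod m$ splits into congruences modulo the prime-power factors of $m$, and the idea is to split the work correspondingly: the inductive hypothesis handles the $n$-part, while the mod-$p$ part is addressed by partitioning the positions $\{1,\dots,m\}$ into $n$ blocks of size $p$ (for instance, the arithmetic progressions $\{j, j+n, j+2n,\dots, j+(p-1)n\}$ for $j=1,\dots,n$) and applying Theorem~\ref{prim} within each block to a suitable $p$-element sub-multiset of $A$. As long as each block is non-exceptional for Theorem~\ref{prim}, we gain full freedom over its contribution modulo $p$, and these per-block freedoms combine to realize any prescribed total residue mod $p$. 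Coordinating the block-level permutations so that the fine mod-$m$ (rather than merely mod-$p$) requirement is also satisfied will rely on swaps that exchange elements between blocks while preserving each block's $p$-residue.

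The main obstacle, and the technical crux, is the case when the multiset $A$ (or too many of its sub-multisets) reduces modulo some prime divisor $p$ of $m$ to the exceptional configuration of Theorem~\ref{prim}---a dominant residue class together with a $\pm b$ twist. In that regime the block-by-block freedom collapses and one must argue directly. The goal is then to show that either $A$ is already in the exceptional form claimed in the theorem---namely $m-2$ copies of some $a$ together with $a\pm b$ for some $b$ with $(b,m)=1$---or a tailored construction exploiting the few elements outside the dominant residue class modulo $p$ still yields a valid $\pi$. The most delicate point, which I expect to drive the hardest step of the proof, is establishing the coprimality $(b,m)=1$ in the conclusion: one has to track the ``defect'' $b$ through every prime divisor of $m$, ensuring that an exception arising at one prime propagates consistently to the others and cannot be broken up by any combination of inter-block swaps.
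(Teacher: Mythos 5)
Your plan shares the paper's overall skeleton (induction on $\Omega(m)$ with Theorem~\ref{prim} as the base case, and translation invariance from $\sum_i i\equiv 0\pmod m$), but the decomposition you propose has a structural flaw and the hard case is named rather than solved. You partition the positions into $n$ arithmetic progressions $\{j,j+n,\dots,j+(p-1)n\}$ of length $p$ and want CRT to split the target into a mod-$p$ part (handled by Theorem~\ref{prim} inside each block) and a mod-$n$ part (handled by induction). This only makes sense when $(p,n)=1$, i.e.\ when $m$ is squarefree: if $p^2\mid m$ then $\mathbb{Z}_m\not\cong\mathbb{Z}_p\times\mathbb{Z}_n$ and, worse, the positions $j,j+n,\dots,j+(p-1)n$ are all congruent mod $p$, so permuting elements within such a block changes nothing modulo $p$ and you get no freedom at all. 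The paper instead uses $p$ \emph{consecutive} blocks of size $m^*=m/p$, writes $\Phi=R+m^*\sum_i iS_{i+1}$, gets $m^*\mid R$ by induction inside the blocks, and then must lift from $m^*$ to $m$ --- which is where the real work (separable initial orderings, the ``braid trick'' of transposing elements whose indices differ by a multiple of $m^*$, and the induction on the exponent $l$ in Lemma~\ref{baratunk}) takes place. None of that machinery, or a substitute for it, appears in your plan.

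A second concrete gap: you claim that as long as each block is non-exceptional for Theorem~\ref{prim} you ``gain full freedom over its contribution modulo $p$.'' Theorem~\ref{prim} only guarantees that the value $0$ is attained; a block all of whose elements are congruent mod $p$ is non-exceptional yet contributes a single fixed residue under every permutation, so the per-block freedoms need not combine to realize an arbitrary prescribed residue. The paper confronts exactly this degenerate situation (all elements congruent mod $p$) by translating so that $p$ divides every element, dividing through by $p$, and recursing --- an idea absent from your proposal. Finally, the regime you correctly identify as the crux (the block-sum multiset $\{S_1,\dots,S_p\}$ being exceptional, and the propagation of the defect $b$ across all prime powers $p_i^{k_i}$) is left entirely open; in the paper this consumes Lemmas~\ref{speceset1}, \ref{baratunk} and \ref{kivetel}, the last of which resolves the simultaneous exceptional case by assigning to each nonzero element $x$ the coefficient $\pm m/(m,x)$. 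As it stands the proposal is a reasonable outline of the easy half of the argument, with the decisive steps missing or resting on a decomposition that breaks for non-squarefree $m$.
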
 

\medskip

%We may remark here that such exceptional structured multiset trivially does not have suitable permutation.
 The situation is somewhat different if the order of the group is even. In this case we have to deal with two types of exceptional structures. 
The following statements are easy to check.

\begin{prop}\label{except}
Let $m$ be an even number represented as $m=2^kn$, where $n$ is odd.\begin{itemize} \item[(i)]If a multiset $M=\{ a_1,a_2,\dots ,a_m\}$ of $\mathbb{Z}_m$ consists of elements having the same odd residue $c$ mod ${2^k}$, then $M$ has no permutation for which $\sum _iia_i=0$ holds.
\item[(ii)] If $M=\{a,a, \ldots, a+b, a-b\}$ mod ${m}$, where $a$ is even and $(b,m)=1$ holds, then $M$ has no permutation for which $\sum _iia_i=0$ holds. \end{itemize} These two different kind of structures we call \textsl{homogeneous and inhomogeneous exceptional multisets}, respectively.
\end{prop}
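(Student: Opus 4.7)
The plan is to handle each part directly, since in both cases any candidate permutation $\pi$ can be shown to force $\sum_i i\,a_{\pi(i)}\ne 0$ in $\mathbb{Z}_m$ by a short arithmetic argument. The key auxiliary quantity is $\sum_{i=1}^m i=\tfrac{m(m+1)}{2}$, whose $2$-adic valuation controls both parts.

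For part (i) I would not work modulo $m$ at all, but modulo $2^k$, since that is where the hypothesis lives. Every $a_i\equiv c\pmod{2^k}$, so for any permutation $\pi$,
\[
\sum_{i=1}^m i\,a_{\pi(i)}\equiv c\cdot\frac{m(m+1)}{2}=c\cdot 2^{k-1}n(2^kn+1)\pmod{2^k}.
\]
Since $n$, $2^kn+1$, and $c$ are all odd, the right-hand side equals $2^{k-1}$ times an odd number, i.e.\ $\equiv 2^{k-1}\not\equiv 0\pmod{2^k}$. A fortiori it is nonzero mod $m$, proving (i).

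For part (ii), write the permutation as placing $a+b$ at some position $j$ and $a-b$ at some position $k\ne j$, with $a$ at the other $m-2$ positions. Then
\[
\sum_{i=1}^m i\,a_{\pi(i)}=a\cdot\frac{m(m+1)}{2}+(j-k)b.
\]
Using $a$ even and $m$ even, the first term rewrites as $\tfrac{a}{2}\cdot m(m+1)\equiv 0\pmod m$, so the whole sum reduces to $(j-k)b\pmod m$. Since $(b,m)=1$, this can vanish only when $j\equiv k\pmod m$, impossible for distinct $j,k\in\{1,\dots,m\}$.

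The only delicate point, and the one I would flag as the main bit of care required, is the exact parity bookkeeping in part (i): the $2$-adic valuation of $\tfrac{m(m+1)}{2}$ is exactly $k-1$ (no more, no less), so multiplication by the odd element $c$ cannot kill it modulo $2^k$. Beyond this, both statements are essentially one-line checks, consistent with the author's declaration that they are \emph{easy to check}.
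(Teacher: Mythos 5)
Your proof is correct and supplies exactly the elementary verification that the paper omits with the remark that the statements are ``easy to check'': reduce part (i) modulo $2^k$ using $\sum_i i = 2^{k-1}n(2^kn+1)\equiv 2^{k-1}\cdot(\text{odd})$, and reduce part (ii) to $(j-k)b\not\equiv 0\pmod m$ after noting that $a$ even kills the term $a\cdot\frac{m(m+1)}{2}$ modulo $m$. The only blemish is notational: in part (ii) you reuse the letter $k$ for the position of $a-b$ although $k$ already denotes the exponent in $m=2^kn$; rename it to avoid confusion.
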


\begin{theorem}\label{even} Let $M=\{ a_1,a_2,\dots ,a_m\}$ be a multiset in the Abelian group $\mathbb{Z}_m$, $m$ even.  If $M$ is not an exceptional multiset as defined in Proposition \ref{except}, then after a suitable permutation of the indices $\sum _iia_i=0$ holds.
\end{theorem}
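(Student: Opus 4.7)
The plan is to prove Theorem~\ref{even} by a swapping argument analogous to the proof of Theorem~\ref{odd}, with additional bookkeeping for the $2$-part of $m$. For any permutation $\pi$, swapping its entries at two positions $i\neq j$ changes the sum $\Sigma(\pi):=\sum_{\ell=1}^m \ell\, a_{\pi(\ell)}$ by exactly $(i-j)\bigl(a_{\pi(j)}-a_{\pi(i)}\bigr) \pmod m$. Hence all values $\Sigma(\pi)$ lie in a single coset of the subgroup $H\le\mathbb{Z}_m$ generated by $\{(i-j)(a-a') : i\ne j,\ a\ne a',\ a,a'\in M\}$, and since $i-j$ hits every nonzero residue, $H=d\mathbb{Z}_m$ with $d:=\gcd\bigl(m,\{a-a' : a,a'\in M\}\bigr)$.

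The main argument splits on $d$. If $d=1$, then $H=\mathbb{Z}_m$, and I would show that --- except in the inhomogeneous configuration of Proposition~\ref{except}(ii) --- the target $-\Sigma(\mathrm{id})$ is realizable by a legal sequence of transpositions, that is, transpositions between positions currently carrying distinct values. If $d>1$, every element of $M$ shares a common residue modulo $d$; the delicate subcase is $2^k\mid d$, when all $a_i$ have a common residue $c$ mod $2^k$. A direct calculation gives $\Sigma(\pi) \equiv c\cdot 2^{k-1}n \pmod{2^k}$ for all $\pi$, which equals $2^{k-1}$ if $c$ is odd (matching the homogeneous exception (i)) and $0$ if $c$ is even. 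In the latter case, writing $a_i = 2a_i'$ one reduces to solving the problem on the odd part $\mathbb{Z}_n$, where Theorem~\ref{odd} applies.

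Exception (ii) requires a direct treatment since it falls under $d=1$. In that configuration $\Sigma(\pi) = a\, m(m+1)/2 + b(p-q)$, where $p,q$ are the positions of $a+b$ and $a-b$; with $a$ even this collapses to $b(p-q)\pmod m$, nonzero by $(b,m)=1$ and $p\neq q$. To rule out further exceptional families, I would enumerate multisets with at most three distinct values, confirming in each case either a valid permutation or one of the two configurations of Proposition~\ref{except}; multisets with at least four distinct values admit enough flexibility for the swap argument to succeed unconditionally.

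The hardest step will be certifying reachability: showing that the transpositions needed to attain $-\Sigma(\mathrm{id})$ can be performed in a legitimate order without ever attempting to swap two equal entries. Exception (ii) demonstrates that this is not automatic even when $H=\mathbb{Z}_m$. I expect this obstacle to be handled by distinguishing multisets rich in distinct values, where a greedy or inductive construction should carry through, from sparse multisets, which must be analyzed case-by-case and aligned with the exceptional families of Proposition~\ref{except}.
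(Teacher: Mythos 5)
Your opening observation is correct but only gives an upper bound on the set of achievable values: since a transposition at positions $i,j$ changes the sum by $(i-j)(a_{\pi(j)}-a_{\pi(i)})$, all permutational sums do lie in a single coset of $H=d\mathbb{Z}_m$ with $d=\gcd(m,\{a-a'\})$, and your computation that this coset is $2^{k-1}\mathbb{Z}_m\setminus 2^k\mathbb{Z}_m$-shifted precisely when all elements share an odd residue mod $2^k$ correctly recovers exception (i). The genuine gap is everything after that: knowing $0$ lies in the correct coset says nothing about whether $0$ is \emph{attained}, and exception (ii) --- which you yourself note falls under $d=1$ --- already shows the achievable set can be a proper subset of the coset. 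Your plan for closing this gap is to assert that ``multisets with at least four distinct values admit enough flexibility for the swap argument to succeed unconditionally'' and to enumerate the rest. That assertion is essentially a restatement of the theorem, not a proof; nothing in the proposal explains why a chain of legal transpositions reaching $-\Sigma(\mathrm{id})$ exists, and no enumeration of ``sparse'' multisets is carried out. Note that even the prime case (Theorem \ref{prim}) required a delicate polynomial-method argument, so reachability cannot be expected to follow from a greedy construction.

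Concretely, what is missing is the entire inductive architecture the paper uses: write $m=2^kn$, induct on $k$, split $M$ into two blocks of size $m/2$ ordered separably with respect to $2$, use the induction hypothesis (and Theorem \ref{odd} at the base) to force $\tfrac{m}{2}\mid\Phi$ via block-preserving permutations, then upgrade to $m\mid\Phi$ with transpositions of pairs $(a_i,a_{i+m/2})$, $(a_i,a_{i+2^{k-s}n})$ --- the ``braid trick'' --- while separately disposing of the configurations where a block is itself exceptional (Lemmas \ref{hulye} and \ref{atlast}). Your proposal also leaves two secondary holes: the case $d>1$ with $d$ having an odd prime factor but $2^k\nmid d$ is not discussed at all, and the claimed reduction of the all-even case to $\mathbb{Z}_n$ does not typecheck as stated, since the multiset still has $m$ elements and the positions still range over $1,\dots,m$, so one cannot simply ``apply Theorem \ref{odd} on the odd part'' --- the paper instead reapplies its block decomposition to the halved multiset. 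As it stands the proposal identifies the right obstruction but does not supply the mechanism that overcomes it.
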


The presented results might be extended in  different directions. One may ask whether there exists a permutation of the elements of a given multiset $M$ of $\mathbb{Z}_m$ (consisting of $m$ elements), for which the sum $\sum _iia_i$ is equal to a prescribed element of $ \mathbb{Z}_m$.
This question is related to a conjecture of  Britnell and Wildon, see \cite[p.~20]{britnell}, which can be reformulated as follows. Given a multiset $M=\{ a_1,a_2,\dots ,a_m\}$ of $\mathbb{Z}_m$,  all  elements of $\mathbb{Z}_m$  are admitted as the value of the sum $\sum_{i=1}^m ia_{\pi(i)}$ for an appropriate permutation $\pi \in S_m$, unless one of the following holds: \begin{itemize} \item $M=\{a,\ldots,a, a+b, a-b\}$, \item there exists a prime divisor $p$ of $m$ such that all elements of $M$ are the same mod $p$.
%$\exists p \mbox{ ~and~ } c: p|m$ and $a_i\equiv c \pmod p ~\forall i.$ 
\end{itemize} 

Our result may in fact be considered as a major step towards the proof of their conjecture, which would provide a classification of values of determinants associated to special types of matrices. When $m$ is a prime, the conjecture is an immediate consequence of Theorem \ref {prim} and Lemma \ref{trafo1} (ii). 
  
As for another direction, these questions are also meaningful for arbitrary finite Abelian groups, but to find the exact characterization appears to be a difficult task in general. For example, in the Klein group $\mathbb{Z}_2^2$, the multiset consisting of all different group elements has no zero `permutational sum', whereas all other multisets do have. Meanwhile in  the group $\mathbb{Z}_2^3$, all multisets have a permutational sum which is zero. %However, a complete characterisation seems to be moderately difficult.   \\ 
%There are some possibilities for applying this result, see the paper of\\ on determinants of special matrices

%Perhaps one think that this result have some flavor of Snevily's conjecture (that we may call XY's theorem), \cite{Sn}, \cite{Snevily},  so let us recall Snevily's conjecture. It states that for any Abelian group $G$ of odd order (written multiplicatively), and  positive integer $n\leq | G| $, for any sets 
%$\{ a_1,\dots ,a_n\}$ and $\{b_1,\dots ,b_n\}$ of elements of $G$, there is a permutation $\pi$ of the indices, such that the elements $a_1b_{\pi(1)}$, $a_2b_{\pi(2)}$, \dots ,$a_nb_{\pi(n)}$ are different. Alon proved this for groups of prime degree \cite{Alon1} and later Dasgupta, K\'arolyi, Serra and Szegedy \cite{Snev} for cyclic groups. Moreover, Alon proves a more general fact: he only assumes that $\{a_1,\dots ,a_n\}$ is a multiset if $n<| G| $. Let us remark that if this would be true for n= $| G| $ too, then there would be no exception in Theorem \ref{prim}, and the proof would easily follow from this general version. 

As it was briefly explained in \cite{Gacs}, the problem has a connection to Snevily's conjecture \cite{Sn}, solved recently by Arsovski \cite{Snevily}.  It would be natural to try to adapt the techniques which were successful for Snevily's problem, but our problems are apparently more difficult. In order to prove Theorems \ref{prim} and \ref {primhatvany}, we had to replace the relatively simple approach of Alon  \cite{Alon1} by a more delicate application of the Combinatorial Nullstellensatz \cite{Alon}, \cite{Karolyi} and we do not see how Theorem \ref{odd}, for example, could be obtained by the method of   \cite{Snev}.
%Although the polynomial approach of Alon , and in particular the applications of the Combinatorial Nullstellensatz \cite{Alon},  or the approach of K\'arolyi et al. for Abelian groups,  \cite{Karolyi} proved instrumental in obtaining Theorems \ref{prim} and \ref{primhatvany}, it seems that the generalization  to  cyclic groups of arbitrary order requires completely different methods.\\ %we do not see a way to attack the above problems using Arsovski's result.

%The presented results could be extended in two different ways. Either one can ask whether there exists a permutation of the elements of a given multiset $M$ of $\mathbb{Z}_m$ (consists of $m$ elements), for which the sum $\sum _iia_i$ is equal to a prescribed  $x \in \mathbb{Z}_m$. \\ On the other hand, the question (with $x$ being the identity element)
%may be posed for arbitrary abelian groups.\\ %For example in the Klein group $\mathbb{Z}_2^2$, the multiset consisting of all different group elements has no zero permutational sum, whereas all other multisets have, while for the group $\mathbb{Z}_2^3$, all multisets have zero permutational sum. As for that, the general characterisation of the multisets of abelian groups  (of corresponding cardinality) for which a suitable permutation exist, seems to be particularly interesting. 

The paper is organized as follows. In Section 2, we collect several simple observations that are used frequently throughout the paper and sketch our  proof strategy. Section~3 is devoted to the proof of Theorem \ref{odd}. In Section 4 we will verify  Theorem \ref{even} in some particular cases, whose proof do not exactly fit in the general framework (and may be skipped at a first reading). The complete proof, which is more or less parallel to that of Theorem \ref{odd}, is carried out in Section 5. 

\bigskip

%--------------------------------------------------------------------------------------------------------------------

\section{Preliminaries}

\bigskip

%\noindent We start with the reformulation of the main question.

\begin{defn} Let $M = \{ a_1,\dots ,a_m\}$ be a multiset in $\mathbb{Z}_m$. A permutational sum of the elements of $M$ is any sum of the form $\sum_{i=1}^m ia_{\pi(i)}$, $\pi \in S_m$. If, after some rearrangement, we fix the order of the elements of $M$, then the permutational sum of $M$ considered as a sequence $(a_1, \ldots, a_m)$ is simply  $\sum _{i=1}^m ia_i$.
\end{defn}

Accordingly, the aim is to determine which multisets admit a zero permutational sum. This property is invariant under certain transformations.
%By this, the question in center of attention is to determine the multisets for which the permutational sums do not admit the value zero.\\
%If the multiset elements are regarded in a fixed order, then by considering the permutational sum we mean the sum $\sum_{i=1}^m ia_{\sigma(i)}$ corresponding to the identity permutation $\sigma$.

%The following simple observations will be used frequently throughout the paper.
\begin{lemma} \label{trafo1}
Let $m$ be odd, and $M$ be a multiset in $\mathbb{Z}_m$ of cardinality $m$.

\begin{itemize}

%\begin{prop}
\item[(i)]   If no permutational sum of $M$ admits the value $0$, then the same holds for any translate $M+c$ of $M$, and also for any  dilate $cM$ in case $(c,m)=1 $.
%\end{prop} 

%\begin{prop}\label{trafo2} 
\item[(ii)] If the permutational sums of $M$ admit a value $w$, then they also admit the value $kw$ for every integer $k$ with $(m,k)=1$. As a consequence, if $(m,w)=1$, then the permutational sums take at least $\varphi(m)$ different values.
%\end{prop} 

%\begin{prop}\label{allvalue} 
\item[(iii)] Assume that  $M$ has the exceptional structure, i.e. $M= \{ a,\dots, a, a+b, a-b \}$ where $(b,m)=1$. Then the permutational sums of $M$ admit each element of $\mathbb{Z}_m$ except zero.
%\end{prop} 
\end{itemize}
\end{lemma}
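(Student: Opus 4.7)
The plan is to dispatch the three parts by direct calculation, exploiting the identification of $\{1,\dots,m\}$ with $\mathbb{Z}_m$ under which $m$ plays the role of $0$, so that permuting indices amounts to permuting residues mod $m$.

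For (i), translating $M$ by $c$ shifts every permutational sum by the constant $c(1+2+\cdots+m)=cm(m+1)/2$, which is $\equiv 0\pmod m$ because $m$ is odd (the factor $(m+1)/2$ is an integer, leaving an explicit factor of $m$). Dilating $M$ by a unit $c$ multiplies every permutational sum by $c$, and since multiplication by $c$ is a bijection of $\mathbb{Z}_m$, whether $0$ lies in the set of sums is unchanged.

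For (ii), the key device is to precompose a permutation $\pi$ realizing $w$ with the reindexing $i \mapsto k^{-1}i \pmod m$, which is a bijection of $\{1,\dots,m\}$ whenever $(k,m)=1$. The resulting permutation $\pi'$ satisfies
\[
\sum_{i=1}^{m} i\, a_{\pi'(i)} \;=\; \sum_{j \in \mathbb{Z}_m} (kj)\, a_{\pi(j)} \;=\; k\sum_{j} j\, a_{\pi(j)} \;=\; kw.
\]
Taking $k$ to run over the units of $\mathbb{Z}_m$ then yields the $\varphi(m)$ distinct values $kw$ as soon as $(w,m)=1$.

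For (iii), use (i) first to normalize $M$ to $\{0,0,\dots,0,1,-1\}$ (translate by $-a$, then dilate by $b^{-1}$). For any permutation $\pi$, only the two nonzero entries of $M$ contribute, so the permutational sum reduces to $j-k$ for the positions $j,k \in \{1,\dots,m\}$ to which $1$ and $-1$ are sent. As $(j,k)$ varies over ordered pairs of distinct indices, $j-k \pmod m$ realizes every nonzero residue (any nonzero $r$ can be written as $r-0$ or as $j-(j-r)$ for a suitable $j$), but never $0$, giving exactly $\mathbb{Z}_m\setminus\{0\}$.

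None of these steps presents a serious obstacle; the most error-prone point is the bookkeeping in (ii), where one must consistently view the index set as $\mathbb{Z}_m$ (with $m$ identified with $0$) so that multiplication by $k$ is a well-defined bijection on indices and the vanishing term $m\cdot a_{\pi(m)}$ is correctly absorbed.
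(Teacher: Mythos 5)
Your proof is correct and follows essentially the same route as the paper, whose entire argument is the observation that (i) and (iii) rest on $1+2+\cdots+m\equiv 0\pmod m$ and that (ii) rests on $i\mapsto ki$ being a permutation of the index set when $(k,m)=1$. The only point worth making explicit in (iii) is that the normalization requires slightly more than the literal statement of (i) --- namely that translation leaves every permutational sum unchanged and dilation by a unit permutes the nonzero values bijectively --- but this is exactly what your computation in (i) already establishes.
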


\begin{proof}
Parts (i) and (iii) are straightforward, for $1+2+\ldots +m \equiv 0 \pmod m$. Part (ii) follows from the fact that $\pi\in S_m$ holds for the function $\pi$ defined by $\pi (i)=ki $ .
\end{proof}

The sumset or Minkowski sum $C+D$ of two subsets $C$ and $D$ of an Abelian group $G$ written additively is $C+D=\{c+d ~|~ c\in C, d\in D\}$.
The following statement is folklore. 

\begin{lemma}\label{sumset} For $C, D \subseteq\mathbb{Z}_m$, $|C|+|D|>m$ implies $C+D=\mathbb{Z}_m$. 
\end{lemma}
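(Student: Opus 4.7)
The plan is to fix an arbitrary target $g \in \mathbb{Z}_m$ and show $g \in C + D$; once this is established for every $g$, the equality $C + D = \mathbb{Z}_m$ follows immediately. My approach is a one-line pigeonhole argument using translation.

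First I would form the translate $g - D = \{g - d : d \in D\} \subseteq \mathbb{Z}_m$. Because the map $d \mapsto g - d$ is a bijection of $\mathbb{Z}_m$ (it is the composition of negation with a translation), we have $|g - D| = |D|$. Now both $C$ and $g - D$ sit inside the universe $\mathbb{Z}_m$, whose cardinality is $m$. If these two subsets were disjoint, their union would have cardinality $|C| + |D| > m$, which is impossible in an $m$-element set.

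Hence $C \cap (g - D) \neq \emptyset$, so there exist $c \in C$ and $d \in D$ with $c = g - d$, i.e.\ $c + d = g$. This exhibits $g$ as a member of $C + D$, and since $g$ was arbitrary we conclude $C + D = \mathbb{Z}_m$.

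There is really no obstacle here: the entire content is the elementary inclusion-exclusion bound $|A \cap B| \geq |A| + |B| - m$ inside the universe $\mathbb{Z}_m$, applied to $A = C$ and $B = g - D$. The only point worth noting explicitly is that the argument uses nothing about the group structure beyond the fact that translation by $-d$ (or, equivalently, the map sending $d$ to $g - d$) is a bijection of the ambient set, so the proof in fact works for any finite abelian group of order $m$.
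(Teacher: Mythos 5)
Your proof is correct and is exactly the standard pigeonhole argument one would expect; the paper itself gives no proof, simply labelling the statement as folklore, and your argument (intersecting $C$ with the translate $g-D$) is the canonical justification. Nothing further is needed.
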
 

In the remaining part of this section, we sketch the proof of Theorem \ref{odd}. Recall that the arithmetic function $\Omega(n)$ represents the total number of prime factors of $n$. Similarly to the classical result in zero-sum combinatorics due to Erd\H os, Ginzburg and Ziv \cite{erdos}, we proceed by induction on $\Omega(m)$.
 The initial case is covered by Theorem \ref{prim}, so in the sequel we assume that $m$ is a composite number and fix a prime divisor $p$ of $m$ and write $m=p^kn$, where $(p,n)=1$.
 
The proof is carried out in several steps (of which the first two will be quite similar to the beginning of the proof of  Theorem \ref{even}).

%To find a zero permutational sum, we set an initial ordering of the multiset elements. Then we partition them into $p$ blocks of equal size. \textit{(First step)}.\\
%We use induction on each block considered as multisets in $\mathbb{Z}_{\frac{m}{p}}$ to achieve a permutation of the block elements for which the permutational sum is zero modulo $\frac{m}{p}$, if possible. \textit{(Second step)}.\\

\bigskip

\noindent {\bf2.1. First step}

\bigskip

\noindent  We introduce the notion of  \textit{initial order} as follows.

\begin{defn}
Let $s=(b_1, b_2, \ldots, b_m) $  be any sequence in $\mathbb{Z}_{m}$.
\begin{itemize}
\item[(i)] A cyclic translate of $s$ is any sequence of the form $(b_i, b_{i+1}, \ldots, b_m, b_1, \ldots, b_{i-1}).$
\item[(ii)] The sequence $s$  is separable (relative to the prime divisor $p$ of $m$) if  equivalent elements mod ${p^l}$ are consecutive for every $1\leq l\leq k$.  
\end{itemize}
\end{defn}

 \noindent Thus separability means that for $1\leq i<j\leq m$ and every $l\leq k$, $a_i \equiv a_j \pmod {p^l}$ implies $a_i \equiv a_h \pmod {p^l}$ for  every $i<h<j$.
Note that one can always order the elements of $M$ into a separable sequence. Choose and fix an initial order of the elements of $M$ such that some cyclic translate of the sequence $(a_1, a_2, \ldots, a_m)$ is separable. 

A useful property of such an ordering is summarized in the following lemma whose proof is straightforward.

\begin{lemma}\label{rem}   Consider a sequence of $m$ elements in $\mathbb{Z}_{m}$, which admits a separable cyclic translate. Partition the elements into $k\geq 3$ consecutive blocks $T_1, \ldots, T_k$. If for an integer $l$, a certain residue $r$ mod ${p^l}$ occurs in every block, then at most two of the blocks may contain an element having a residue different from $r$.
The same conclusion holds if the elements are rearranged inside the individual blocks.
\end{lemma}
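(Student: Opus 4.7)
The plan is to leverage separability to show that, in the original sequence (which is a cyclic shift of some separable one), the positions occupied by elements congruent to $r$ mod $p^l$ form at most two contiguous runs. Concretely, in the separable cyclic translate, all elements sharing residue $r$ mod $p^l$ occupy a single consecutive arc, since separability forces equivalent elements mod $p^l$ to sit adjacently. Reading this arc back in the original linear order, it appears either as one interval $[u,v]\subseteq\{1,\dots,m\}$, or, in the case the arc straddles the cut point of the cyclic translate, as a union $[1,v]\cup[u,m]$ of two intervals.

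I would then dispatch the two cases in turn. Suppose first that the residue-$r$ positions form a single interval $[u,v]$. The assumption that every block $T_i$ meets $[u,v]$ forces the leftmost intersected block to be $T_1$ and the rightmost to be $T_k$. Since $[u,v]$ is itself an interval and the blocks are consecutive, every block strictly between $T_1$ and $T_k$ is entirely contained in $[u,v]$ and therefore consists exclusively of $r$-elements; only $T_1$ and $T_k$ can contain a non-$r$ element. Suppose instead that the residue-$r$ positions form the wrap-around pair $[1,v]\cup[u,m]$. Then the non-$r$ positions comprise the single middle interval $[v+1,u-1]$. If this interval intersected three or more blocks, an intermediate one would lie entirely inside it and contain no $r$-element, contradicting the hypothesis; hence the non-$r$ interval is covered by at most two consecutive blocks.

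The last sentence of the lemma comes for free: the assertion is phrased purely in terms of which elements belong to which block, and permuting elements inside a fixed block leaves this membership data intact. The assumption $k\geq 3$ enters only to guarantee the existence of a \emph{strictly intermediate} block in the arguments above; for $k\leq 2$ the statement is vacuous. There is no genuinely hard step here---the only care required is the bookkeeping between the cyclic separable order and the two possible shapes of the residue-$r$ set in the original linear sequence.
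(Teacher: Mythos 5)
Your proof is correct. The paper actually omits the argument entirely (it declares the lemma ``straightforward''), and what you supply is precisely the intended reasoning: separability makes the residue-$r$ positions a cyclic interval, so the non-$r$ positions form at most two linear intervals (or one, in the wrap-around case), and the hypothesis that $r$ meets every block forces any strictly intermediate block to lie wholly inside the $r$-interval; the final claim about intra-block rearrangement is indeed immediate since the conclusion depends only on block membership.
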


Let $(a_1, \ldots a_m)$ be an initial order. Form $p$  consecutive blocks of equal size, denoted by $T_1, T_2, \ldots, T_p$, each containing $m^*:={m/p}$ consecutive elements. More precisely, $$ T_i= \{a_{(i-1)m^*+1}, a_{(i-1)m^*+2}, \ldots, a_{im^*}\}.$$
$S_i$ denotes the sum of the elements in $T_i$, while $R_i$ denotes the permutational sum of the block $T_i$ (as a multiset), that is, $R_i= \sum_{j=1}^{m^*}ja_{j+(i-1)m^*}$.\\
Writing $R=\sum_{i=1}^p R_i$, the permutational sum of $M$ takes the form

$$\Phi= \sum_{j=1}^m ja_j = \sum_{i=1}^p \left(R_i + m^*(i-1)S_{i}\right) = R + m^*\sum_{i=0}^{p-1}iS_{i+1}.$$
%We use these notations of the sums, permutational sums and blocks throughout the paper.\\

\bigskip

\noindent {\bf2.2. Second step}

\bigskip

\noindent Our aim here is to ensure that $m^*\mid \Phi$ holds after a well structured rearrangement of the elements. That is, we want to achieve that $m^*\mid R$ holds.
To this end we allow reordering the elements inside the individual blocks. Such a permutation will be referred to as a \textit{block preserving permutation}.
We distinguish three different cases. 

First, if there is no exceptionally structured block mod ${m^*}$, then by the inductional hypothesis the elements in each block $T_i$ can be rearranged so that $m^*$ divides $R_i$. Thus, after a block preserving permutation, $m^*\mid R$.

Next, if there is an exceptionally structured block $T_i$, then the permutational sums over $T_i$ take $m^*-1$ different values mod ${m^*}$, see Lemma \ref{trafo1} (iii). If there are at least two exceptionally structured blocks, then it follows from Lemma \ref{sumset} that there is a block preserving permutation that ensures  $m^*\mid R$.

Finally, if there is exactly one exceptionally structured block  $\{a,\ldots, a, a+b, a-b\}$ $\pmod {m^*}$, then a permutational sum of this block can take any value except $0$ mod ${m^*}$. 
%If some other block's elements can be permutated to take a none-zero permutational sum mod $p^{k-1}$, then we are ready. Otherwise, every element in a  block $B$ must be the same mod $p^{k-1}$. %Hence transposing $a+b$ or $a-b$ to an element of $B$, neither $B$, nor the exceptional block will have an exceptional structure, and we are done. 
So after a block preserving permutation we are ready, unless zero is the only value that the other blocks admit, that is, all elements must be the same in each block mod ${m^*}$.\\ This latter case can be avoided by a suitable choice of the initial order in the first step. %Indeed, for example for prime power order $m$, switching to a new initial order the cyclic translate of the initial order is permitted if the first and last elements are not congruent $\pmod p$. Hence starting the translated initial order with a well chosen element from the exceptional block, we can avoid having one exceptional block while the other blocks are homogeneous $\pmod {m^*}$.
Indeed, translating the initial order cyclically so that it starts with an appropriate element from the exceptional block will break down this structure.

\bigskip

\noindent {\bf2.3. Third step}

\bigskip

\noindent To complete the proof,  based on the relation $m^*\mid \Phi$ we further reorganize the elements to achieve a zero permutational sum, or else to conclude to (one of) the exceptional case(s). We only outline here the  strategy of the proof, as the following section is devoted to the detailed discussion.

As a first approximation, we try to change the order of the blocks to obtain 
 $$ \sum_{i=0}^{p-1}iS_{i+1}\equiv -R':= - \frac{R}{m^*}  \pmod p,$$
which would imply $m \mid \Phi.$ One is tempted to argue that the case $R'\equiv 0 \pmod p$ would be easy to resolve applying  Theorem \ref{prim} for the multiset $\{S_1, \ldots, S_p\}$. As it turns out, the main difficulty is to handle exactly this case, since the  multiset $\{S_1, \ldots, S_p\}$ may have the exceptional structure. A remedy for this is what we call the `braid trick'. The main idea of this tool will be  to consider the transposition of  a pair of elements  whose indices differ by a fixed number $x$    (typically a multiple of $m^*$).  By this kind of transposition of a pair ($a_i, a_{i+x}$), the permutational sum increases by $x(a_i- a_{i+x})$, providing a handy modification.  
 
%--------------------------------------------------------------------------------------------------------------------

\bigskip

\section{The case of odd  order} 

\bigskip

\noindent In this section we complete the proof of Theorem \ref{odd}. We continue with the details of the third step outlined in the previous section. 
We distinguish two cases according to whether $R'$ is divisible by $p$ or not.\\

\noindent {\bf3.1. $ R'$ is not divisible by $p$.}
 
\bigskip 
 
\noindent  Note that $ \sum_{i=0}^{p-1}iS_{i+1}$ can be viewed as a permutational sum of the multiset \ $\mathcal{S}=\{S_1, S_2, \ldots, S_p\}$. If there are two elements $S_i \not \equiv S_j \pmod p$, then their transposition changes the value of the permutational sum of $\mathcal{S}$ mod $p$. In particular, the permutational sums of $\mathcal{S}$ admit a nonzero value mod $p$. From Lemma \ref{trafo1} (ii) it follows that they admit each nonzero element of  $\mathbb{Z}_{p}$ and in particular $-R'$ too.

Otherwise, we have $S_1\equiv S_2\equiv \ldots \equiv S_p \pmod p$.
 We use the \texttt{braid trick}: we look at the  pairs $(a_i, a_{i+m^*} )$ for every $i$.
The elements $a_i$ and $a_{i+m^*}$ occupy the same position in two consecutive blocks $T_j, T_{j+1}$. If they have different residues mod ${p}$, then their transposition leaves $R$ intact, hence $R'$ does not change either. On the other hand, $S_j$ and $S_{j+1}$ change whereas each other $S_i$ remains the same, therefore the previous argument can be applied.

Finally, we have to deal with the case when $a_i \equiv a_{i+lm^*} \pmod {p}$ holds for every possible $i$ and $l$. This is the point where we exploit the separability property.  The initial order has changed only inside the blocks during the second step. Since the number of blocks is at least three, it follows from Lemma \ref{rem} that $a_i\equiv a_j \pmod p$ for all $1\leq i<j\leq m$ in $M$. In this case we prove directly that $M$ has a zero permutational sum.
 In view of Lemma \ref{trafo1} (i), we may suppose that every $a_i$ is divisible by $p$.
Consider $M^*:= \{\frac{a_1}{p}, \frac{a_2}{p}, \ldots, \frac{a_m}{p}\}$. Apply the first two steps for this multiset $M^*$. It follows that $M^*$ has a zero permutational sum mod ${m^*}$, which implies that $M$ has a zero permutational sum mod $m$.\\

\noindent{\bf3.2. $ R'$ is divisible by $p$}

\bigskip

\noindent Here our aim is to prove that $p\mid  \sum_{i=0}^{p-1}iS_{i+1}$ holds for a well chosen permutation of the multiset $\mathcal{S}:=\{S_1,\ldots, S_p\}$.
This is exactly the problem what we solved in Theorem \ref{prim}, which implies that we can reorder the blocks (and hence the multiset $M$ itself) as required, except when the multiset $\mathcal{S}$ has the form $\{A, A, \ldots, A, A+B, A-B\}$, with the condition $(B,p)=1$.

Once again, we apply the braid trick.
If $a_i$ and $a_{i+lm^*}$ have different residues mod $p$, then we try to transpose them in order to destroy this exceptional structure. As in Subsection \textsl{$3.1$}, $R$ does not change. We call a pair of elements exchangeable if their indices differ by a multiple of $m^*$.  
  
Thus, a zero permutational sum of $M$ is obtained unless no transposition of two exchangeable elements destroys the exceptional structure of $\mathcal{S}$. The following lemma gives a more detailed description of this situation.

\begin{lemma} \label{speceset1} 
Suppose that no transposition of two exchangeable elements destroys the exceptional structure of $\mathcal{S}$. Then either this exceptional structure can be destroyed by two suitable transpositions, or $M$ contains only three distinct elements : $t, t+B, t-B$ mod $p$ for some $t$ with the following properties:
\begin{itemize}
 
\item $t+B$ occurs only in one block, and only once;
\item $t-B$ occurs only in one block, and only once;
\item $t+B$ and $t-B$ occupy the same position in their respective blocks.
\end{itemize} 
\end{lemma}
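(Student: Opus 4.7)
The plan is to translate the hypothesis into structural information about $M$ modulo $p$ by a careful case analysis of single exchangeable transpositions, and then either exhibit an explicit destroying pair of transpositions or invoke Lemma \ref{rem}. First I would classify single transpositions according to the types of the two blocks involved, where $\alpha, \beta$ denote the exceptional blocks with $S_\alpha \equiv A+B$ and $S_\beta \equiv A-B \pmod p$. The four cases are (normal-normal), (normal-$\alpha$), (normal-$\beta$), and ($\alpha$-$\beta$). In each I would compute when the post-swap $\mathcal{S}$ is still of the form $\{X, \ldots, X, X+Y, X-Y\}$: the constraints on the residue difference $d$ of the swapped pair turn out to be $d \equiv 0 \pmod p$ (normal-normal), $d \in \{0, B\} \pmod p$ (normal-$\alpha$), $d \in \{0, -B\} \pmod p$ (normal-$\beta$), and $a_i^{(\beta)} - a_i^{(\alpha)} \not\equiv -B \pmod p$ ($\alpha$-$\beta$). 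The first three show that within normal blocks the residue at every within-block position $i$ is a well-defined value $r_i$, and define subsets $I_\alpha, I_\beta \subseteq \{1, \ldots, m^*\}$ of positions where $\alpha, \beta$ deviate from the normal pattern by $+B, -B$. The fourth constraint then forces $I_\alpha = I_\beta =: I$, and the count $S_\alpha \equiv A + |I| B \pmod p$ gives $|I| \equiv 1 \pmod p$.

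If $|I| \geq 2$, I would exhibit a destroying pair. For $p \geq 5$, pick $i_1 \neq i_2$ in $I$ and two distinct normal blocks $u_1, u_2$. Transposing at position $i_1$ between $u_1$ and $\alpha$, then at position $i_2$ between $u_2$ and $\alpha$, produces an $\mathcal{S}$ with multiplicity pattern $(p-4, 2, 2)$ for $(A, A+B, A-B)$, which is not exceptional. For $p = 3$ there is only one normal block, so the analogous construction transposes it first with $\alpha$ at $i_1$ and then with $\beta$ at $i_2$, and the resulting $\mathcal{S} = \{A, A, A\}$ is likewise non-exceptional. This places us in the first alternative of the lemma.

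Otherwise $|I| = 1$, say $I = \{i_0\}$. At every $i \neq i_0$, all $p$ blocks share the same residue $r_i$, while at $i_0$ the residues across blocks are $r_{i_0}$ (in the $p-2$ normal blocks), $r_{i_0}+B$ (in $\alpha$), and $r_{i_0}-B$ (in $\beta$). Since $m^* \geq 2$ (the case $m=p$ being the base case of the induction), some $i_1 \neq i_0$ exists and $R := r_{i_1}$ appears in every block. I would then apply Lemma \ref{rem} to $R$: at most two blocks contain an element of residue $\neq R$. If some $r_j$ with $j \neq i_0$ differed from $R$, then all $p \geq 3$ blocks would deviate at position $j$, a contradiction; hence $r_j = R$ for every $j \neq i_0$. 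For $p \geq 5$, the $p-2 \geq 3$ normal blocks would deviate at $i_0$ unless $r_{i_0} = R$, forcing $r_{i_0} = R$; setting $t := R$ yields the three residues $t$ (with multiplicity $m-2$), $t+B$ (once, from $\alpha$), and $t-B$ (once, from $\beta$), all agreeing with the stated properties. For $p = 3$ the same count allows $r_{i_0} \neq R$, in which case Lemma \ref{rem} pins one of $\alpha, \beta$ to be all-$R$; this forces $r_{i_0}+B = R$ or $r_{i_0}-B = R$, and $t := R$ (the dominant residue) again satisfies all three conditions with $t \pm B$ each occurring once at position $i_0$.

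The main obstacle will be Case IV: unlike the three other cases, the $\alpha$-$\beta$ swap constrains only the difference $a_i^{(\beta)} - a_i^{(\alpha)}$, and one must combine it with the normal-$\alpha$ and normal-$\beta$ constraints to conclude $I_\alpha = I_\beta$. A secondary subtlety is the $p = 3$ branch of the Lemma \ref{rem} step, where the right choice of $t$ is the dominant residue $R$ from positions $i \neq i_0$ rather than the normal-block residue $r_{i_0}$.
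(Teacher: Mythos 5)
Your proof follows essentially the same route as the paper's: classify which single exchangeable transpositions preserve the exceptional structure of $\mathcal{S}$, deduce the rigid position-wise pattern (all blocks agree at each within-block position, except that $T^+$ and $T^-$ may deviate there by $+B$ and $-B$ simultaneously), invoke Lemma \ref{rem} to force a single common residue $t$, and destroy the structure with two transpositions when the deviation occurs at more than one position. Your version is, if anything, slightly more careful than the paper's --- the explicit $I_\alpha=I_\beta$ bookkeeping and the $p=3$ edge cases, and ordering the steps so that the existence of a homogeneous set of exchangeable elements follows from $|I|=1$ rather than being asserted up front --- but it is the same argument.
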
  

\begin{proof}
Denote by $T^+$ and $T^-$ the blocks for which the sum of the elements is $A+B$ and $A-B$, respectively.
Apart from elements from $T^+$ and $T^-$,  two exchangeable elements must have the same residue mod $p$. Furthermore, if a transposition between $a_j\in T^-$ and $a_{j+lm^*}\notin T^+$  does not change the structure of $\mathcal{S}$, that means $a_j\equiv a_{j+lm^*} \pmod p$ or $a_j\equiv a_{j+lm^*}-B \pmod p$. Similar proposition holds for $T^+$. 

Consider now a set of pairwise exchangeable elements. One of the followings describes their structure:  either they all have the same residue mod $p$, or they have the same residue $t$ mod $p$ except the elements from $T^+$ and $T^-$, for which the residues are $t+B$ and $t-B$, respectively.

Observe that both cases must really occur since the sums $S_i$ of the blocks are not uniformly the same. In particular, there is a full set of $p$ pairwise exchangeable elements having the same residue  mod $p$.

Since the number of blocks is at least $3$, we can apply  Lemma \ref{rem}. We only used block-preserving permutations so far, hence it follows that all elements have the same residue mod $p$ --- let us denote it by $t$ --- except some  $(t+B)$'s in  $T^+$, and the same number of $(t-B)$'s in  $T^-$, in the very same position relative to their blocks.

We claim that this number of different elements in $T^+$ and $T^-$ must be one, otherwise we can destroy the exceptional structure with two transpositions. Indeed, by contradiction, suppose that there exist two distinct set of exchangeable elements where the term corresponding to $T^+$ and $T^-$ is $t+B$ and $t-B$, respectively. Pick a block different from $T^+$ and $T^-$ and denote it by $T$. Then transpose $t+B\in T^+$ and $t \in T$ in the first set, and $t-B\in T^-$ and $t \in T$ in the second set. The new structure of $\mathcal{S'}$ obtained this way is not exceptional any more.
\end{proof}

\begin{lemma}\label{baratunk}
Suppose that $M$ contains only three distinct elements : $t, t+B, t-B$ mod $p$ for some $t$ with the following properties:
\begin{itemize}
 
\item $t+B$ occurs only in one block, and only once;
\item $t-B$ occurs only in one block, and only once;
\item $t+B$ and $t-B$ occupy the same position in their respective blocks.
\end{itemize}
Then either a suitable zero permutational sum exists or the conditions on $M$ hold mod ${p^l}$ for every $l\leq k$, with a suitable $B=B_l$ not divisible by $p$.
\end{lemma}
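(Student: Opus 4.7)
The lemma is proved by induction on $l$, with the base case $l=1$ being the given hypothesis. For the inductive step, assume the exceptional structure holds mod $p^l$ for some $l$ with $1\le l<k$; the goal is to show that $M$ either admits a zero permutational sum, or the same structure propagates to mod $p^{l+1}$. After translating by $-t$ via Lemma \ref{trafo1}(i), we may assume the $m-2$ non-exceptional elements are divisible by $p^l$, while the two exceptional elements $a_\alpha\in T^+$ and $a_\beta\in T^-$ satisfy $a_\alpha\equiv B_l$ and $a_\beta\equiv -B_l\pmod{p^l}$, located at a common position $j_0$ within their respective blocks.

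The plan is to redo the analysis of Sections~2.2 and 3.2 one $p$-adic level finer, restricted to operations that preserve the mod-$p^l$ structure of $M$ (and hence the exceptional structure of $\mathcal{S}$ mod $p$). The admissible operations are: (a) within-block permutations of the non-exceptional elements that keep $m^*\mid R_i$; (b) braid-trick swaps among exchangeable non-exceptional pairs; and (c) braid-trick swaps between one of $a_\alpha,a_\beta$ and a non-exceptional element sharing the block-position $j_0$. A swap of type (b) between non-exceptional $a_i,a_{i+hm^*}$ that differ mod $p^{l+1}$ alters $\sum_{i=0}^{p-1}iS_{i+1}$ by a nonzero multiple of $p^l$; an operation of type (c) relocates $a_\alpha$ (or $a_\beta$) to another block, contributing further summands proportional to $B_l$, while keeping position $j_0$ fixed.

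If any variation exists among the non-exceptional elements mod $p^{l+1}$, or the exceptional elements $a_\alpha,a_\beta$ cannot be written as $t'+B_{l+1}$ and $t'-B_{l+1}$ for some common $t'\equiv 0\pmod{p^l}$ and unit $B_{l+1}\equiv B_l\pmod{p^l}$, then combining the freedoms in (a), (b), (c)---and invoking Lemma \ref{sumset} as needed---produces enough values of $\Phi\pmod m$ to include $0$. Conversely, the only way to avoid a zero permutational sum is to have all non-exceptional elements share a common residue $t'\pmod{p^{l+1}}$ with $a_\alpha,a_\beta$ of the precise form $t'\pm B_{l+1}$ for some unit $B_{l+1}\equiv B_l\pmod{p^l}$. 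The same-position and uniqueness conditions persist automatically because the admissible moves in (c) exchange $a_\alpha$ (resp.\ $a_\beta$) only with other position-$j_0$ elements, and Lemma \ref{rem} applied to the separable initial order rules out additional residue violations across blocks.

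The principal obstacle is the careful book-keeping of how combinations of (a), (b), (c) jointly affect $\Phi\pmod{p^{l+1}}$ under the constraint $m^*\mid R$; in particular, one must verify that whenever the propagated structure fails, the available operations generate a large enough set of achievable values of $\Phi$ mod $m$ to cover $0$.
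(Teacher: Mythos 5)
Your proposal has the right skeleton (induction on $l$, normalizing $t$ to $0$ via Lemma \ref{trafo1}~(i), a braid-trick case analysis ending either in a zero sum or in the propagated structure), but it omits the one idea that makes the inductive step work, and the step you yourself flag as the ``principal obstacle'' is precisely the unproved core. Your operation (b) restricts the braid trick to \emph{exchangeable} pairs, i.e.\ index gaps that are multiples of $m^*=p^{k-1}n$. This is useless here: if two non-exceptional elements agree mod $p^{l-1}$ (the inductive hypothesis) but differ mod $p^{l}$, a swap across a gap $hm^*$ changes $\Phi$ by $hm^*(a_i-a_j)$, whose $p$-adic valuation is at least $(k-1)+(l-1)\geq k$, so $\Phi$ does not move at all mod $p^k$ (indeed not at all mod $m$); such swaps can neither detect nor exploit a discrepancy at level $p^l$. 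The paper instead transposes a pair $a_i,a_j\notin\{a^+,a^-\}$ whose index difference is divisible by $p^{k-l}$ but \emph{not} by $p^{k-l+1}$. Then $(i-j)(a_j-a_i)$ has valuation exactly $(k-l)+(l-1)=k-1$, so $R$ is unchanged mod $p^{k-1}$ while $R'$ becomes nonzero mod $p$, $\mathcal S$ stays inhomogeneous mod $p$, and one concludes by the already-settled Subsection~3.1. This calibration of the index gap to the $p$-adic level of the element difference is the entire content of the lemma, and it is absent from your argument; without it, the claim that operations (a), (b), (c) ``produce enough values of $\Phi$ to cover $0$'' has no support, and Lemma \ref{sumset} has nothing to act on.

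Two smaller points. First, in the negative case the paper does not need your operations (a) and (c) at all: once no suitable transposition exists, Lemma \ref{rem} (separability and at least three blocks) forces all residues mod $p^l$ to coincide except those of $a^+$ and $a^-$, and the relation $a^++a^-\equiv 0\pmod{p^l}$ --- i.e.\ the $t'\pm B_l$ form you assert --- is \emph{derived} from the standing condition $R'\equiv 0\pmod p$, not assumed. Second, your operation (c), moving $a^+$ or $a^-$ to another block, is not used and would risk destroying the block-sum structure $\{A,\dots,A,A+B,A-B\}$ that the surrounding argument of Subsection~3.2 relies on. As written, the proposal does not constitute a proof.
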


\begin{proof}
We proceed by induction on $l$. Evidently, it holds for $l=1$.\\
According to  Lemma \ref{trafo1} (i) we may assume that $t \equiv 0 \pmod p$. Let $a^+$ and $a^-$ denote the elements of $T^+$ and $T^-$ for which $a^+ \equiv B \pmod p$ and $a^- \equiv -B \pmod p$. Note that their position is the same in their blocks.

Suppose  that $l\geq 2$ and the conditions  hold mod ${p^{l-1}}$.
Consider the residues of the elements mod ${p^l}$ now. We use again the braid trick. %and compare $a_i$ and $a_{i+p^{k-2}}$ for every $i$, apart from  the elements $a^+$ and $a^-$. If they are different $\pmod {p^2}$, their transposition does not change the  residue of $R \pmod {p^{k-1}}$, but $R' \equiv 0 \pmod p$ does not hold yet. This means we are in case of Subsection $3.1$, and we are done.\\
Suppose that there exist  $a_i, a_j \not\in \{a^-, a^+\}$ such that $i-j$ is divisible by $p^{k-l}$ but not by $p^{k-l+1}$, and   $a_i \not\equiv a_j \pmod {p^l}$. After we transpose them, (the residue of) $R$ does not change  mod ${p^{k-1}}$, but it changes by $(i-j)(a_j-a_i)\neq 0 \pmod {p^{k}}$. For the new permutational sum thus obtained, $R' \not\equiv 0 \pmod p$ holds, while the multiset $\mathcal{S}$ may change, but certainly  it does not become homogeneous mod $p$. Thus $M$ has a zero permutational sum, as in Subsection 3.1.

Otherwise, in view of Lemma \ref{rem} it is clear that  all the residues must be the same mod ${p^l}$, and we may suppose they are zero, except the residues of $a^+$ and $a^-$. In addition, $a^+ + a^- \equiv 0 \pmod {p^l}$ must hold too, since $R' \equiv 0 \pmod p $. This completes the inductional step.
\end{proof}

Lemma \ref{baratunk} applied for $l=k$ completes the proof of Theorem \ref{odd} when $m=p^k$ is a prime power. In the sequel we assume that   $n\neq 1$. Let $m=p_1^{k_1}p_2^{k_2}\ldots p_n^{k_r}$ be the canonical form of $m$. Note that the whole argument we had so far is valid for any prime divisor $p$ of $m$. Therefore, to complete the proof , we may assume that $M$ has the exceptional structure mod ${p_i^{k_i}}$ as described in Lemma \ref{baratunk} for every $p=p_i$.

%Notice that if $m$ is a prime power, then the proof is completed as the remaining exceptional case is the one where no zero permutational sum occurs.\\ 
%However, we are still not done if $n\neq 1$, thus we carry on with this case and verify that if the permutation sum does not admit the value zero, then $M$ necessarily has the exceptional structure.

%Remark that we may suppose we have the same structure modulo any of the prime divisors of $m$ as well. Then we determine a suitable permutation according to Lemma \ref{kivetel} apart from the exceptional case of the Theorem \ref{odd}, that completes the proof.

\begin{lemma}\label{kivetel} The conclusion of Theorem \ref{odd} holds if $M$ has exceptional structure modulo each $p_i^{k_i}$.
\end{lemma}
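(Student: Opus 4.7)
The idea is to glue the per-prime-power exceptional structures from Lemma~\ref{baratunk} via the Chinese Remainder Theorem, and either recognize $M$ as the global exceptional multiset of Theorem~\ref{odd} or construct a zero permutational sum directly.

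Let $t\in\mathbb{Z}_m$ be the unique element with $t\equiv t_i\pmod{p_i^{k_i}}$ for every $i$. Replacing $M$ by $M-t$ via Lemma~\ref{trafo1}(i), we may assume $t=0$: for each prime $p_i$, exactly $m-2$ elements of $M$ are $\equiv 0\pmod{p_i^{k_i}}$, while two distinguished elements $c_i^+,c_i^-\in M$ have residues $+B_i$ and $-B_i$ modulo $p_i^{k_i}$. Set $N=\bigcup_{i=1}^r\{c_i^+,c_i^-\}$. First suppose $|N|=2$, say $N=\{c^+,c^-\}$. Since $\{c^+,c^-\}=\{c_i^+,c_i^-\}$ for every $i$, we get $c^++c^-\equiv B_i+(-B_i)\equiv 0\pmod{p_i^{k_i}}$ no matter which element plays the role of $c_i^+$; hence $c^-\equiv -c^+\pmod m$ and $(c^+,m)=1$ because $c^+$ is nonzero modulo every $p_i^{k_i}$. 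Thus $M=\{0,\ldots,0,c^+,-c^+\}$ is the global exceptional multiset of Theorem~\ref{odd}, and no zero permutational sum is required.

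Assume now $|N|\geq 3$. A permutational sum $\sum_j j a_{\pi(j)}$ vanishes modulo $p_i^{k_i}$ iff the positions $\sigma(c_i^+),\sigma(c_i^-)\in\{1,\ldots,m\}$ satisfy $\sigma(c_i^+)\equiv\sigma(c_i^-)\pmod{p_i^{k_i}}$, since the bulk elements contribute $0$ modulo $p_i^{k_i}$. By CRT, producing a zero permutational sum modulo $m$ therefore amounts to assigning pairwise distinct positions $\sigma(x)\in\{1,\ldots,m\}$ to the elements of $N$ so that all $r$ of these congruences hold simultaneously (the bulk elements then fill the remaining positions arbitrarily). Using the CRT isomorphism $\mathbb{Z}_m\cong\prod_i\mathbb{Z}_{p_i^{k_i}}$, the $i$-th congruence involves only the $i$-th coordinate, so the problem decouples: at each coordinate $i$ we fix $\sigma(c_i^+)_i=\sigma(c_i^-)_i$ to a common value in $\mathbb{Z}_{p_i^{k_i}}$ and freely choose the $i$-th coordinates of the remaining at most $|N|-2$ elements of $N$.

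The remaining issue is pairwise distinctness of the resulting positions $\sigma(x)\in\mathbb{Z}_m$. For any $x\neq y$ in $N$, if $\{x,y\}=\{c_i^+,c_i^-\}$ held for every $i$ we would have $N=\{x,y\}$, contradicting $|N|\geq 3$; hence for every pair in $N$ there is some coordinate at which at least one of the two values is free and can be made different from the other. A union-bound or greedy argument — exploiting $|N|\leq 2r$ and the lower bound $p_i^{k_i}\geq 3$ on every prime-power factor (so that each free coordinate offers ample room, and the factorizations $m=\prod P_{\pi}$ forced by the pair-structure on $N$ satisfy $\sum_{\pi}P_{\pi}<m$ whenever at least two of the $P_{\pi}$ exceed $1$) — then shows that the free choices can be made simultaneously so that all $\sigma(x)$ are distinct. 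This combinatorial distinctness check is the main delicate point of the argument; the rest is routine CRT bookkeeping.
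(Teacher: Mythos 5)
Your setup runs parallel to the paper's: translate so that the common residue $t$ is $0$, observe that all but at most $2r$ elements vanish modulo $m$ so their coefficients are irrelevant, and recognize the case $|N|=2$ as the exceptional multiset $\{0,\dots,0,c,-c\}$ with $(c,m)=1$. The reduction of the remaining case to a position-assignment problem (choose positions for the elements of $N$ with $\sigma(c_i^+)\equiv\sigma(c_i^-)\pmod{p_i^{k_i}}$ for every $i$, all positions pairwise distinct) is also correct. But the decisive step --- that such distinct positions exist whenever $|N|\ge 3$ --- is precisely what you do not prove: you defer it to ``a union-bound or greedy argument,'' and the parenthetical about factorizations $m=\prod P_\pi$ is too vague to verify. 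The difficulty is real, not routine bookkeeping: an element of $N$ may be distinguished for every prime $p_i$, in which case a naive greedy placement leaves all of its coordinates forced by earlier choices, so one needs either a carefully chosen order with foresight or an actual counting argument. As written, the crux of the lemma is asserted rather than proved.

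The paper closes exactly this gap by strengthening the target. Instead of asking the two distinguished positions for $p_i$ to cancel each other modulo $p_i^{k_i}$, it assigns to each nonzero $x\in M$ the coefficient $c_x=\pm m/(m,x)$, which annihilates every term individually: for primes where $x$ is distinguished, $p_i\nmid x$ so $p_i^{k_i}$ divides $m/(m,x)$, and for the remaining primes $p_i^{k_i}$ divides $x$. Distinctness of these coefficients follows from the observation that no three elements can share the same value of $(m,x)$ --- three such elements would all fail to be divisible by some $p_i^{k_i}$, whereas only two may --- and the unique obstruction, namely two elements with $(m,x)=1$ both demanding the coefficient $\pm m\equiv 0$, is precisely the exceptional multiset you already identified. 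If you wish to salvage your formulation, note that $\sigma(x)=\pm m/(m,x)$ is an explicit solution of your assignment problem, since it places every element distinguished for $p_i$ at a position $\equiv 0\pmod{p_i^{k_i}}$; but some such explicit construction (or a complete combinatorial argument) is needed before your proof is complete.
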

 
\begin{proof} 
We look at the permutational sums of $M$ leaving the elements of $M$ in a fixed order $a_1, a_2, \ldots, a_m$ while permuting the coefficients $1, 2, \ldots, m$. 
According to Lemma~\ref{trafo1}~(i) we may assume that all elements,  except two, are divisible by $p_1^{k_1};$   all elements,   except two, are divisible by $p_2^{k_2}$, and so on. It follows that at least $m-2r$ elements are zero mod $m$, so their coefficients are irrelevant. 
So we only have to assign different coefficients  to the nonzero elements $x_i$ of $M$. For any $0\neq x\in M$, we choose its coefficient $c_x$ to be  either $\frac{m}{(m,x)}$ or  $-\frac{m}{(m,x)}$, ensuring that $c_xx=0$ in $\mathbb{Z}_m$. If such an  assignment is possible, the permutational sum will be zero . 

First, observe that  $\frac{m}{(m,x)}$ and $-\frac{m}{(m,x)}$ are the same if and only if $(m,x)=1$.  Note that for each $i$, $p_i$ divides $(m,x_i)$ for all $x_i$, except two. Hence  there is no triple $x_1, x_2, x_3$ of the elements for which $(m,x_i)$ would be the same. Thus we can assign a different coefficient to each $x_i\neq 0$, except when there exist two of them, for which  $(m,x_i)=1$. But this is exactly the exceptional case $M=\{0, 0, \ldots, 0, c, -c \}$, where $(c,m)=1$.
\end{proof}

\bigskip

%---------------------------------------------------------------------------------------

\section{Special cases of Theorem \ref{even}}

\medskip

% We turn our attention to the cyclic groups of even order. 
\noindent In this section we prove that Theorem \ref{even} holds  for some specially structured multisets.
%Let us mention that if $m$ was even, then Proposition \ref{except} distinguished two types of exceptional structures, the so called homogeneous and the inhomogeneous multisets.

\begin{lemma} \label{hulye}
Let $m=2n$, $n>1$ odd, and let $M$ be a multiset in $\mathbb{Z}_m$ consisting of two blocks of size $n$ in the form $T_1= \{a, \ldots, a, a+b, a-b\}$ and $T_2= \{c, \ldots, c\} \pmod n$, where $(b,n)=1$. If one of the blocks contains elements from only one parity class then Theorem \ref{even} holds.
\end{lemma}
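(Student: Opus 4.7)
The plan is to apply CRT, handling the target relation $\sum_{i} i a_{\pi(i)} \equiv 0 \pmod{2n}$ as the conjunction of the conditions modulo $n$ and modulo $2$. The main structural device is to group the positions into $n$ pairs $\{i, i+n\}$, $i = 1, \ldots, n$: both positions in such a pair share the residue $i$ mod $n$, so the mod-$n$ contribution of the pair is $i(x+y)$ where $x, y$ are the two elements placed there, and the within-pair swap changes the total sum by $-n(x-y)$, which is $\equiv 0 \pmod{n}$ and $\equiv x+y \pmod{2}$ (since $n$ is odd). Thus the mod-$n$ sum depends only on a partition of $M$ into $n$ unordered pairs together with a bijection between their sums and the coefficients $\{1, \ldots, n\}$, while within-pair swaps let us flip the mod-$2$ value precisely when a pair contains elements of different parities.

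First I would realize the mod-$n$ condition. By Theorem~\ref{odd} applied to the pair-sum multiset in $\mathbb{Z}_n$, it suffices to exhibit a partition of $M$ whose sum-multiset is not exceptional in $\mathbb{Z}_n$. The partition $P_1$ pairing $(a+b)$ with $(a-b)$, each of the remaining $a$-elements of $T_1$ with a distinct $c$-element of $T_2$, and the two leftover $c$-elements together, yields pair-sums $\{2a, (a+c)^{n-2}, 2c\}$, which is exceptional only when $a \not\equiv c \pmod{n}$ and $\gcd(a-c, n) = 1$. In that exceptional case, the partition $P_2$ (valid for $n \geq 5$) pairing $(a+b)$ and $(a-b)$ each with an $a$-element, $n-4$ further $(a, c)$-pairs, and the remaining four $c$-elements as two $(c,c)$-pairs, yields pair-sums $\{2a+b, 2a-b, (a+c)^{n-4}, (2c)^2\}$, which is exceptional only when $a \equiv c \pmod{n}$. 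So at least one of $P_1, P_2$ works; the edge case $n = 3$ is handled by direct enumeration, as one of three natural partitions (obtained by pairing $a+b$ or $a-b$ with a $c$-element) is non-exceptional depending on whether $c \equiv a + b$ or $c \equiv a - b \pmod{3}$. Applying Theorem~\ref{odd} then yields a permutation $\pi$ with $\sum_{i} i a_{\pi(i)} \equiv 0 \pmod{n}$.

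Finally, I would adjust the mod-$2$ value. If the chosen partition contains a mixed-parity pair, a within-pair swap toggles the mod-$2$ value, so both parities are achievable and we conclude. The only obstruction is that every pair in the chosen partition is monochromatic and the current mod-$2$ value is $1$; here the hypothesis that one block is parity-monochromatic becomes essential. If $T_1$ is parity-monochromatic, then writing $T_1 = \{a', \ldots, a', a' + \beta, a' - \beta\}$ in $\mathbb{Z}_{2n}$ forces $\beta$ even, which rules out the inhomogeneous exceptional form outright; a direct parity accounting on $T_2$ then shows either a mixed $(a,c)$-pair is available or $M$ is of uniform parity (all-even makes mod $2 = 0$ automatic, and all-odd gives the homogeneous exception). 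If instead $T_2$ is parity-monochromatic, so that $T_2 = \{c'\}^n$ in $\mathbb{Z}_{2n}$, the sole residual configuration in which neither $P_1$ nor $P_2$ supplies a mixed pair is $c'$ even, every $T_1$-element of residue $a \pmod{n}$ equal to $c'$ (forcing $a \equiv c \pmod{n}$), and both elements $\equiv a \pm b \pmod{n}$ odd; a direct computation then identifies this as $M = \{c', \ldots, c', c' + \gamma, c' - \gamma\}$ with $c'$ even and $\gcd(\gamma, 2n) = 1$, precisely the inhomogeneous exceptional multiset. The principal technical obstacle is this last identification---verifying that the irreducible ``all-monochromatic with mod-$2$ flip blocked'' residual cases coincide exactly with the exceptional structures of Proposition~\ref{except}.
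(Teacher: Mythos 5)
Your proposal is correct in substance but follows a genuinely different route from the paper. The paper keeps the two blocks of \emph{consecutive} positions $\{1,\dots,n\}$ and $\{n+1,\dots,2n\}$, first exchanges one element $c^*\in T_2$ with $a-b\in T_1$ so that neither block is exceptional mod $n$, applies the odd-order result to each block separately to get $n\mid\Phi$, and only then uses the braid trick on pairs $(x_i,x_{i+n})$ to fix the parity. You instead group the \emph{positions} into the $n$ pairs $\{i,i+n\}$ from the outset, which turns the mod-$n$ condition into a single permutational sum of the $n$-element pair-sum multiset in $\mathbb{Z}_n$; one application of Theorem~\ref{odd} to that derived multiset replaces the paper's two block-wise applications, and the freedom to choose between your partitions $P_1$ and $P_2$ plays the role of the paper's element exchange. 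Note that your within-pair swap is literally the paper's braid-trick transposition of $(a_i,a_{i+n})$, so the parity mechanism is identical; what is new is the clean CRT decoupling of the mod-$n$ and mod-$2$ constraints and the observation that both constraints live on the same pairing. Your accounting of when $P_1$ and $P_2$ have exceptional sum-multisets is accurate (including the borderline check at $n=5$, where $2c$ could a priori coincide with $2a\pm b$ to produce $n-2$ equal sums, but this forces $nb\equiv 0$, contradicting $(b,n)=1$). One small inaccuracy: your description of the terminal blocked configuration as forcing $a\equiv c\pmod n$ is too narrow --- for $n=3$ (and more generally when the element $\equiv a+b$ happens to equal $c'$ in $\mathbb{Z}_{2n}$) one can be blocked with $a\not\equiv c\pmod n$, yet $M$ still turns out to be the inhomogeneous exceptional multiset because $2n-2$ of its elements coincide; the conclusion you need (blocked if and only if $M$ is exceptional or all-even) survives, and you correctly flag this identification as the delicate step, but the case list there should be stated more carefully before this could stand as a full proof.
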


\begin{proof}
First we obtain a permutation for which $n$ divides the permutational sum of $M$. We choose an element $c^*$ from $T_2$. Assume that $c^*\neq a-b \pmod n$ and exchange $c^*$ with $a-b \in T_1$. (If the assumption does not hold then we pick $a+b$ instead of $a-b$ and continue the proof similarly.)
This way we get two blocks $T_1'$ and  $T_2'$, which do not have the exceptional structure mod $n$. Thus there exists a block preserving permutation ensuring that $n$ divides the obtained permutational sums of $T_1'$ and  $T_2'$, thus $n$ also divides the permutational sum of $M$.

 We assume that both odd and even elements occur in $M$, otherwise either $2$ is  trivially a  divisor of $\Phi$ or $M$ has exceptional homogeneous structure.
If the relation  $m\mid \Phi$ does not hold, then we apply the braid trick by looking at the pairs $(x_i, x_{i+n})$. If a pair consists of an odd and an even element, then we may transpose them and the proof is done.

 Otherwise the exchangement of $c^*$ and $a-b$ must have destroyed the property of having a uniform block mod $2$ among  $T_1$ and  $T_2$, that is, $c^*$ and $a-b$ have different parity. Since the choice of $c^* \in T_2$ was arbitrary, we may assume that $T_2= \{c, \ldots, c\} \pmod {2n}$. Moreover, since the braid trick did not help us, every element in $T_1$  congruent to $a$ mod $n$ must have the same parity as the elements $c$, and the parity of element $a+b$ must coincide with that of $a-b$.

In this remaining case consider the blocks $\{a, \ldots, a, c, c\}$ and \mbox{$\{c,\ldots,c,a+b,a-b\}$}. First, if $c\not \equiv a \pmod n$, then neither block  is exceptional as a multiset in  $\mathbb{Z}_n$, hence an appropriate block preserving permutation ensures that $n$ divides the permutational sum. If the permutational sum happens to be odd, then    a  suitable transposition via the braid trick will increase its value by $n$, for the first block contains elements from the same parity class in  contrast to the second.
Finally, if $c\equiv a \pmod n$, then either $c=a$ is even, providing that $M$ has inhomogeneous exceptional structure, or $c=a$ is odd, in which case the permutational sum will be zero if we set $a_{n}=a+b$ and $a_{2n}=a-b \pmod n$.

%we would apply Proposition \ref{sumset} for the set of possible permutational sums of $T_1'$ and $T_2' \pmod{m}$.\\
%Clearly, the permutational sum of $T_2'= \{a-b, c, \ldots, c\} \pmod n$ admits $n$ values, and it is easy to check that $T_1'= \{a, \ldots, a, a+b, c\}$ admits more than $n$ values except when $c\equiv a \pmod {m}$. Hence we are done since $c\equiv a \pmod {m}$ implies that $M$ has exceptional structure if $c$ is even, while the zero value is admitted if $c\equiv a$ is odd.
\end{proof}

\begin{lemma}\label{atlast} Let $m=2^kn > 4$, $n$ odd and $k>1$. Let $M$ be a multiset of $\mathbb{Z}_m$, consisting of two even elements and $m-2$ odd elements having residue $c$ mod ${2^{k-1}}$. Then the permutational sum of $M$ admits the value zero.
\end{lemma}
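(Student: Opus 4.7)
The plan adapts the strategy of Lemma~\ref{hulye}. Partition the positions $\{1,\ldots,m\}$ into two consecutive blocks $B_1=\{1,\ldots,m/2\}$ and $B_2=\{m/2+1,\ldots,m\}$, placing one even element in each together with $m/2-1$ odd elements; if both residues $c,\,c+2^{k-1}\pmod{2^k}$ occur among the odd elements, distribute the odds so that both are represented in $B_1$. Viewed as a size-$m/2$ multiset in $\mathbb{Z}_{m/2}$, neither block is exceptional in the sense of Proposition~\ref{except}: the single even entry blocks the homogeneous exception modulo~$2^{k-1}$, and the inhomogeneous exception $\{a,\ldots,a,a+b,a-b\}$ (whose $b$ is odd, since $(b,m/2)=1$ and $m/2$ is even for $k\geq 2$) would demand $m/2-2\geq 2$ even entries rather than one. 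Theorem~\ref{even} applied inductively to the smaller modulus $m/2$ then yields an internal permutation of each block making $R_j\equiv 0\pmod{m/2}$.

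A short parity count shows $\Phi=R_1+R_2+(m/2)S_{B_2}\in\{0,m/2\}\pmod m$, since $S_{B_2}$ is odd ($m/2-1$ odd summands with $m/2$ even) and each $R_j\in\{0,m/2\}\pmod m$. If $\Phi\equiv 0$ we are done. Otherwise $\Phi\equiv m/2\pmod m$ and we correct by a braid transposition at distance~$n$ inside $B_1$: swapping positions $i$ and $i+n$ preserves $\Phi\pmod n$, and when the two entries there carry opposite residues modulo~$2^k$ the swap shifts $\Phi$ by $n\cdot 2^{k-1}\equiv m/2\pmod m$, turning $\Phi$ into $0$. The upstream distribution of $B_1$, together with a minor rearrangement of same-type odd elements that preserves $R_1\pmod{m/2}$, guarantees such a pair whenever both mod-$2^k$ residues are realised in $M$.

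The main obstacle is the degenerate sub-case in which every odd element of $M$ shares a single residue $v\pmod{2^k}$: no odd--odd swap at distance~$n$ can then supply the required $m/2$ shift. Here we abandon the block approach and argue directly via the CRT. Choose $p_1,p_2$ to satisfy the linear mod-$2^k$ condition
\[
(e_1-v)p_1+(e_2-v)p_2\equiv 2^{k-1}\pmod{2^k},
\]
which is solvable because $e_i-v$ is odd and hence a unit modulo~$2^k$; this secures $\Phi\equiv 0\pmod{2^k}$. Then arrange the $m-2$ odd elements in the remaining positions to satisfy the residual mod-$n$ condition $\sum_{i\neq p_1,p_2} i\,o_{\pi(i)}\equiv -p_1 e_1-p_2 e_2\pmod n$, which is a standard permutational-sum problem in $\mathbb{Z}_n$ handled via Theorem~\ref{odd} or a sumset argument using Lemma~\ref{sumset}. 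Coupling the rigid mod-$2^k$ constraint on $p_1,p_2$ with the flexible mod-$n$ condition on the odd slots is the technical crux that takes Lemma~\ref{atlast} outside of the general inductive framework of Theorem~\ref{even}.
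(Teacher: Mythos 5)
Your overall architecture (two blocks of size $m/2$, inductive use of Theorem \ref{even} on the non-exceptional blocks, then a distance-$n$ braid swap to correct $\Phi$ from $m/2$ to $0$) is a genuinely different decomposition from the paper's, which instead forms $2^k$ blocks of size $n$, settles the modulus $n$ with the odd-order machinery, and then controls $\Phi$ modulo $2^k$ \emph{explicitly} through the positions $f,g$ of the two even elements via the identity $(*)$. That explicit handle is exactly what your version is missing, and it shows up as two concrete gaps.

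First, the corrective swap. When $\Phi\equiv m/2\pmod m$ you need two positions $i,i+n$ in $B_1$ carrying odd entries of types $c$ and $c^*$ (an even--odd pair shifts $\Phi$ by an odd multiple of $n$ modulo $2^k n$, not by $m/2$). But Theorem \ref{even} is a black box: it returns \emph{some} internal permutation with $R_1\equiv 0\pmod{m/2}$, and the positions mod $n$ decompose $B_1$ into $n$ arithmetic chains of step $n$; if each chain ends up monochromatic in type (entirely feasible, e.g.\ for $m=12$ with chains $\{1,4\},\{2,5\},\{3,6\}$), no valid pair exists even though both types sit in $B_1$. Your proposed repair --- ``a minor rearrangement of same-type odd elements preserving $R_1\pmod{m/2}$'' --- is not available in general: the only swaps that automatically preserve $R_1\bmod m/2$ are between elements congruent mod $m/2$, and a cross-type pair congruent mod $m/2$ must differ by exactly $m/2$ in $\mathbb{Z}_m$, which $M$ need not contain (the $c$-type and $c^*$-type elements can live in disjoint residue classes mod $n$). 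This step needs an actual argument, and the paper supplies one by fixing the block structure so that the second block is entirely of type $c$ and then exhibiting the required transposition by hand.

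Second, the degenerate case. Your mod-$2^k$ placement of $p_1,p_2$ is fine ($e_i-v$ is a unit mod $2^k$), but the ``residual mod-$n$ condition'' is not a standard permutational-sum problem: the coefficient set is $\{1,\dots,m\}\setminus\{p_1,p_2\}$ rather than $\{1,\dots,n\}$, and you must hit a \emph{prescribed, generally nonzero} residue mod $n$. Theorem \ref{odd} only produces the value $0$, and the prescribed-value version is precisely the open Britnell--Wildon-type question the paper declines to prove. Worse, if all $m-2$ odd elements coincide mod $n$ (which is allowed), the sum $\sum_{i\neq p_1,p_2} i\,o_{\pi(i)}$ is rigid and the burden falls back on choosing $p_1,p_2$ to satisfy a simultaneous congruence mod $n$ and mod $2^k$ --- solvable, but not by the mechanism you invoke. (A lesser structural point: since Lemma \ref{atlast} is used inside the inductive step of Theorem \ref{even}, your appeal to Theorem \ref{even} for the modulus $m/2$ must be folded explicitly into the induction on $k$ to avoid circularity.)
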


\begin{proof}
 Denote the even elements by $q_1$ and $q_2$. We distinguish the elements having residue $c$ mod ${2^{k-1}}$ according to their residues mod ${2^{k}}$, which  are  $c$ and  $c^* \equiv c+2^{k-1} \pmod {2^k}$.  We may suppose that the number of elements $c$ is greater than or equal to the number of elements $c^*$.

First we solve the case $n=1$ meaning $m=2^k$, $k>2$.
 Taking $a_{m/2}=q_1$, $a_{m}=q_2$, the permutational sum will be divisible by $m/2$. If there is no element $c^*$, then the permutational sum is in fact divisible by $m$.
If there exist some elements $c^*$ among the odd elements and the permutational sum is not yet divisible by $m$, then    a transposition between two elements $c$ and $c^*$ whose indices differ by an odd number will result in a zero permutational sum mod $m$.\\

Turning to the general case $n>1$, we initially order the elements as follows. Even elements precede the others,  elements $c$ mod ${2^k}$ precede the elements $c^*$ mod ${2^k}$, and equivalent elements mod $m$ are consecutive. Form $2^k$ blocks of equal size $n$. 

With an argument similar to the one in  Section 2.2 we arrive at two cases. Either we obtain a permutational sum congruent to zero mod $n$ after a block preserving permutation, or the structure of the blocks are as follows: there is exactly one exceptional block (as a multiset in  $\mathbb{Z}_n$) and  the other blocks only admit a zero permutational sum mod $n$ meaning that each of them consists of equivalent elements mod $n$.\\

%Either all the blocks have a permutation such that $n$ is a divisor of the permutational sum of the block, or there exists an exceptional block. That would mean the permutational sum of the block admits all values $\pmod n$ except zero. Applying  Lemma \ref{sumset} as in Section 2.2 to the values admitted by the permutational sum of the blocks, either the permutational sum of $M$ admits all values $\pmod n$ including zero, or  there is exactly one exceptional block while  each other block consists of equivalent elements $\pmod n$.\\ %In the latter case, transpose a suitable pair of elements between the exceptional block and one of the other blocks to achieve that no block is exceptional any more. Then after a suitable block preserving permutation, $n$ will divide the permutational sum $\Phi_0$.\\

 \noindent \textsl{Case 1)} Consider the block preserving permutation, which results in a permutational sum $\Phi_0$ divisible by $n$.
We modify this permutation, if necessary, to get one corresponding to a zero permutational sum mod ${2^k}$, while the divisibility by $n$ is preserved.

We denote by $f$ and $g$ the indices of $q_1$ and $q_2$ in the considered permutation.
Thus $$\Phi_0 \equiv c\frac {2^k(2^k-1)}{2}+  f(q_1-c) + g (q_2-c)\pmod {2^{k-1}}. \ \ \ \ \ \ \ \ \ \  (*)$$%, that is,\\ $\Phi_0 \equiv  t2^{k-1}+ (q_1-c)f + (q_2-c)g \pmod {2^{k}}$ for a number $t \in\{0,1\}$. 

\noindent Note that $\{ln: l=0,1, \ldots, 2^k-1\}$ is a complete system of residues mod ${2^k}$. Let $l$ be the solution of the congruence $$(q_1-c)ln\equiv -\Phi_0 \pmod {2^k}. $$
Thus transposing $q_1=a_f$ with $a_{f+ln} $ implies  that 

\[\Phi_1\equiv \left\{ \begin{array}{ll} 0 \pmod {2^k} & \textrm{if $a_{f+ln}\equiv c \pmod {{2^k}}$} \\ 2^{k-1}  \pmod {2^k} & \textrm{if  $a_{f+ln}\equiv c^* \pmod {{2^k}}$.} \end{array} \right. \] 

%$\Phi_1\equiv 0 \pmod {2^k}$ for the new permutational sum $\Phi_1$ if  $a_{f+ln}\equiv c \pmod {{2^k}}$, and   $\Phi_1\equiv 2^{k-1} \pmod {2^k}$ if  $a_{f+ln}\equiv c^* \pmod {{2^k}}$.\\
\noindent The relation $n\mid \Phi_1$ still holds.  So  in the case when $a_{f+ln}\equiv c \pmod {{2^k}}$ we are done, and if $a_{f+ln}\equiv c^* \pmod {{2^k}}$ we have to increase the value of the permutational sum by $2^{k-1}n$ mod $m$.
Recall that each element in the second block is $c$ mod ${2^k}$.
Therefore transposing $a_f\equiv c^*\pmod {{2^k}}$ with $a_{f+n}\equiv c \pmod {{2^k}}$ in this latter case does the job. \\

\noindent \textsl{Case 2) } One of the blocks (not necessarily the first one) has the exceptional structure, while every other is homogeneous  mod ${n}$. 
We can still argue as in the previous case if, performing the following operation, we can destroy the exceptional structure without changing the position of the even elements $q_1, q_2$ and the entire second block.
Namely, we try to transpose two nonequivalent elements mod $n$, one from the exceptional block and one from another block.
If this is not possible with the above mentioned constraints, then  the exceptional block must be among the first two.  
Furthermore, every element congruent to $c$ mod ${2^k}$ in the first two blocks must be equivalent mod $n$. 
Thus we only have to deal with the following structure: the first block is the exceptional one,  $q_1$ and $q_2$ correspond to $a+b$ and $a-b$ in the exceptional structure, all the other elements contained in the first two blocks are equivalent mod $m$ (and congruent to $c $ mod ${2^k}$), and the remaining blocks are all homogeneous mod $n$.

%Consider the exceptional block and any other block. We can transpose two nonequivalent element, one from each, to break up this structure. (If it is possible, we will not touch the even elements during this transposition.) This means that after a suitable block preserving permutation, $n$ divides the permutational sum which we denote by $\Phi_0$. 

% If neither the position of the even elements nor the second block  changes during this transposition, then we can argue as in the previous case. Since the choice of the other block was arbitrary, we may suppose that one of the first two blocks was the exceptional one.
%The argument above still works, if we could transpose an element congruent to $c \pmod {2^k}$ from the exceptional block to an element congruent to $c \pmod {2^k}$ from another block, which pair of elements are not equivalent $\pmod n$. Thus we may suppose that all the element congruent to $c \pmod {2^k}$ in the first two block  are also equivalent $\pmod n$.
 %So after all, the only missing case is when the first block is the exceptional one,  $q_1$ and $q_2$ correspond to $a+b$ and $a-b$ in the exceptional structure, while each of the other blocks consists of equivalent elements  $\pmod n$. 
 
Exchanging $q_2$ with any element from the second block destroys the exceptional structure of the first block, which means that after a suitable block preserving permutation  the permutational sum of each block becomes $0$ mod $n$, ensuring $n\mid \Phi$ for the multiset. At this point the indices of the even elements are  $n$ and $2n$.

 Next, keeping the order inside the blocks we rearrange them so that the first and second blocks become the $2^{k-1}$th and $2^{k}$th, that is, $a_{{m}/{2}}=q_1$ and $a_m = q_2$. Hence, maintaining $n\mid \Phi$ we also achieve $2^{k-1}\mid \Phi$ via equality $(*)$.
 
Either we are done or  $\Phi \equiv 2^{k-1}n \pmod m$. The latter can only happen if there exists an element of type $c^*$. If a block contains both elements of type $c$ and $c^*$, then a transposition of a consecutive pair of them  within that block increases  $\Phi$ by $2^{k-1}n$. Otherwise there must exist a block containing only elements of type $c^*$. This implies the existence of a pair of $c$ and $c^*$ whose position differs by $n$. Their transposition increases  $\Phi$ by $2^{k-1}n^2\equiv 2^{k-1}n \pmod m$,   solving the case.
\end{proof}

 \bigskip

%------------------------------------------------------------------------------------------------------------------------------------

\section{ The case of even order}

\bigskip

\noindent One main difference between the odd and the even order case is due to the fact that  Lemma \ref{trafo1} (i) does not hold if $m$ is even, for $1+2+ \ldots + m$ is not divisible by $m$. That explains the emergence of the exceptional structure, see Proposition \ref{except}.
 
%In this section,  we prove that the permutational sum admits the value zero for all other multisets, as Theorem \ref{even} states.

\begin{remark} \label{note}
It is easy to check that after a suitable permutation of the indices,  $\sum _iia_i\equiv m/2 \pmod m$ holds for the exceptionally structured multisets.
\end{remark}

%\begin{proof}
%One only have to verify the claim for the exceptional cases.
%\end{proof}

In order to prove Theorem \ref{even}, we fix the notation $m=2^kn$, where $n$ is odd and $k>0$. Since the cases $m=2$ and $m=4$ can be checked easily, we assume that $m>4$ and prove the theorem by induction on $k$. \\

\noindent \textbf{Initial step}\\

\noindent We have $m=2n$, where  $n>1$ according to our assumption.
Take the multiset $M = \{ a_1,\dots ,a_m\}$ of $\mathbb{Z}_{m}$.  Arrange the elements in such a way that both the odd and the even elements are consecutive. % First we determine an \textit{initial order} by arranging the elements in such a way that equivalent elements $\pmod {2^l}$ are consecutive for every power of $2$ that divides $m$. 
Form two  consecutive blocks of equal size, denoted by $T_1$ and  $T_2$, each containing $n$ elements. Using the notation of Section $2$, the permutational sum of $M$ is 

$$\Phi= \sum_{j=1}^m ja_j =  \left(R_1+R_2 + \frac{m}{2}S_{2}\right) = R + nS_{2}.$$
Our first aim is to ensure that $n\mid \Phi$ holds after a well structured rearrangement of the elements. 

To this end, we may take an appropriate block preserving permutation providing that $n\mid R_i$  holds for $i=1,2$.
Such a permutation exists, except when at least one of the blocks are exceptional mod $n$. However it is enough to obtain a block preserving permutation for which $n\mid R$, and such a permutation exists via Lemma \ref{trafo1}   (iii), unless one of the blocks has exceptional structure (mod $n$) and the other consists of equivalent elements (mod $n$). This latter case was fully treated in Lemma \ref{hulye}.

 %a permutation for which $n\mid \Phi$ holds, and either the initial order is modified only with a block preserving permutation, or by Lemma \ref{hulye}, $m\mid \Phi$ also holds and we are done.
%We make an initial order of the multiset elements according to their residues (mod $2$), that is, the odd and the even elements are consecutive. Then we form two  blocks of equal size, $T_1$ and $T_2$.\\
%$Q$ is an odd number so by Theorem \ref{odd}, we know that the blocks as multisets have permutational sums equal to zero (mod $Q$), apart from some well described exceptional cases. Let us look at the permutational sum $\Phi=R_1+R_2+QS_2$. Our first goal here is to ensure that $Q\mid \Phi$ holds.\\
%According to \textsl{Step 1.} of Theorem \ref{hatvanyos} and applying Proposition \ref{allvalue} and \ref{sumset} in the proof, this can be ensured by a suitable block-preserving permutation, except when one of the blocks has exceptional structure (mod $Q$) and the other consists of equivalent elements (mod $Q$). This latter case was treated in Lemma \ref{hulye}. 

 The next step is to modify the block preserving permutation such that $2 \mid \Phi$ also holds.\\
If it does not hold, then we try to transpose a pair $(a_i, a_{i+n})$ for which $a_i$ and $a_{i+n}$ have different parity, according to the braid trick. The permutational sum would change by $n$ (mod $m$) and we are done. If all pairs have the same parity, then all elements have the same parity. Therefore either $\Phi$ is automatically even or $M$ has
homogeneous exceptional structure. This completes the initial step. \\ %, or all of the elements have the same parity except two, since we made only one transposition at most from the initial order. But this latter case can not happen, since we transposed just in case of exceptional structure (mod $n$) and that would directly mean that $2\mid \Phi$ holds in view of the present permutation.\\
%Remark that though maybe there does not exist a zero permutational sum (mod $m$), but there exist one for which $\frac{m}{2} \mid \Phi$.

\noindent\textbf{Inductional step}\\

\noindent Assume that $k>1$ and Theorem \ref{even} holds for every even proper divisor of $m$. 
Recalling Definition 2.4, we  choose a separable sequence  relative to the prime divisor $2$ of $m$ as an initial order.  Partition the multiset into two blocks of equal size, $T_1$ and $T_2$. 
% We apply a block preserving permutation to ensure that $2^{k-1}n\mid (R_1+R_2)$ if possible.
Introduce $m^*:={m/2}=2^{k-1}n$, and assume first that $m^*\mid R_1+R_2$ can be achieved by a suitable block
preserving permutation.
 By induction, we can do it if both blocks as multisets have a structure different from the ones mentioned  in Proposition \ref{except}.
 If both blocks as multisets have exceptional structure mod ${m^*}$, then in view of Remark \ref{note} there exists a block preserving permutation for each block such that  $\sum _iia_i \equiv m/4  \pmod {m^*}$, thus $m^*\mid R_1+R_2$ holds.
 Finally, we can also achieve this relation if exactly one of the blocks has exceptional structure, and the permutational sum of the other block admits the value $m/4$ mod ${m^*}$.

%In the following, we assume that $2^{k-1}n\mid (R_1+R_2)$ can be achieved by a suitable block preserving permutation.
Suppose that $m \mid R_1+R_2$ does not hold, otherwise we are done. 
Apply the braid trick and consider the pairs $(a_i, a_{i+2^{k-1}n})$. They must have the same parity, otherwise transposing them would make $\Phi$ divisible by $m$, which would complete the proof. Due to the separability of the initial order, all elements must have the same parity.

Consider now the pairs $(a_i, a_{i+2^{k-2}n})$. Either we can transpose the elements of such a pair to achieve a zero permutational sum, or the elements must have got the same residue mod ${2^2}$.
Apply this argument consecutively with exponent $s=1, 2, \ldots k$, for pairs $(a_i, a_{i+2^{k-s}n})$ and modulo $2^{k-s}$, respectively. Either $m \mid \Phi$ is obtained during this process  by a suitable transposition of a pair $(a_i, a_{i+2^{k-s}n})$ or all elements must have the same residue  $r$ mod ${2^k}$.

 If $r$ is odd, then $M$ has homogeneous exceptional structure described in Proposition \ref{except}. If $r$ is even, then $2^k$ would divide $\Phi$, for $\Phi \equiv r\frac {2^k(2^k-1)}{2} \pmod {2^k}$. Thus the concusion of the theorem holds in this case.\\

The remaining part of the proof is the case when only one of the blocks is exceptional mod $m^*$, and the permutational sum of the other block does not admit the value $m/4 \pmod {m^*}$. We refer to this latter condition by (**), and we may suppose that the second block is the exceptional one (otherwise we reverse the sequence). 
%Two cases are left, either  $T_2$ has the form $\{a,a, \ldots, a, a+b, a-b\} \pmod {2^{k-1}n}$ where $a$ is even and $(b,m)=1$ in the inhomogeneous case or $T_2$ has the form $\{c,c,\ldots,c\} \pmod {2^{k-1}}$ where $(c,2)=1$ holds in the homogeneous case. These cases are treated in Section 5.1 and 5.2, respectively.\\
According to  Proposition \ref{except},
there are two cases to consider.\\

%Look at the pairs $(a_i, a_i+2^{k-2}n)$ in $T_1$ according to the braid trick. If they give different residue mod $2$, then their exchange makes a permutational sum divisible by $2^{k-1}n$ and we are done. Suppose it is not the case. Then look at  $(a_i, a_i+2^{k-3}n)$ in $T_1$ examine the residue mod $2^2$, and so on. We get that all $(a_i, a_{i+tn})$ pairs in $T_1$ ($t$ is arbitrary), the residue is the same mod $2^{k-1}$.

\noindent{\bf 5.1. The inhomogeneous case}\\

\noindent $T_2 = \{a,a, \ldots,a, q_1=a+b, q_2=a-b\}$ mod ${m^*}$, where $a$ is even and $(b,m)=1$. Note that $T_2$ contains both even and odd elements. Due to the separability of the initial order, all elements in $T_1$ have the same parity.

If $T_1$ consists of odd elements, then we exchange a pair of different odd elements mod ${m^*}$, one from each block. This way $T_2$  becomes non-exceptional. Moreover, an appropriate choice from $\{q_1, q_2\}$ ensures that $T_1$  does not become exceptional either. Thus $m^*$ will be a divisor of the permutational sum after a  suitable block preserving permutation. If $m\mid \Phi$ does not hold, we apply the braid trick for a pair $(a_i, a_{i+m^*})$ for which their parity differs and we are done.

If all elements of $T_1$ are even, then we try to transpose a pair of different even elements mod ${m^*}$, one from each block. Note that if it is possible, $T_1$ will not become exceptional. 
Hence after a block preserving permutation $m^*$ will be a divisor of the permutational sum. If $m\mid \Phi$ did not hold, we apply the braid trick for a pair $(a_i, a_{i+m^*})$ for which their parity differs and we are done.

Assume that  no appropriate transposition exists, that is, $T_1$ must consist of even elements having the same residue $a  $ mod ${m^*}$. It may occur that $M$ has the inhomogeneous exceptional structure.
Otherwise either $q_1+q_2=2a+m^*$, or there exists a pair $ a^{(1)} \not \equiv a^{(2)}\pmod{m}$ in $M$ such that   $a^{(1)} \equiv a^{(2)}\equiv a  \pmod {m^*}$.

We set the permutation now for these cases. Let  $q_1$ and $q_2$ be in the  positions  $1$ and $1+m^*$. Fix arbitrary positions for the rest of elements supposing that if a pair of type  $\{a^{(1)}, a^{(2)}\}$ exists, then the elements of such a  pair are  consecutive. Hence either we are done, or $\Phi \equiv m^* \pmod m$. In the latter case, note that  there must exist a pair of type  $\{a^{(1)}, a^{(2)}\}$ that is arranged consecutively.  Their transposition provides a zero permutational sum which completes the proof.\\

\noindent{\bf 5.2. The homogeneous case}\\

\noindent $T_2 = \{c,c,\ldots,c\}$ mod ${2^{k-1}}$ where $c$ is odd and {(**)} holds for $T_1$.\\

\noindent \textit{Subcase 1)}
     Every odd element $c'\in T_1$ is congruent to $c$ mod $2^{k-1}$.
\noindent Since $T_1$ is not exceptional mod $m^*$, it must contain some even elements. Thus $T_1$ consists of even elements and possibly also some odd elements having residue $c$  mod ${2^{k-1}}$.  %If there were only one or two of the even elements, a suitable permutation exists in view of Lemma \ref{lumma}. Assume there are more of them,
Choose an even element $q_1$ from $T_1$ and transpose it with $c$ in $T_2$. Since (**) holds for $T_1$, neither $T_1$ nor  $T_2$ become exceptional by this transposition. 

Take a permutation of each block for which the permutational sum is zero mod ${m^*}$. Either we are done or $\Phi \equiv m^* \pmod m$ holds. Look at the pairs $(a_i, a_i+m^*)$ according to the braid trick. If a pair takes different residues mod $2$, then their transposition makes the permutational sum divisible by $m$ and we are done. 
Otherwise we must have two even elements, and the others have residue $c$ mod ${2^{k-1}}$. Hence Lemma \ref{atlast} completes the proof.\\

\noindent \textit{Subcase 2)} There exists an odd $c' \in T_1$ for which $c'\not\equiv c \pmod {2^{k-1}}$.
 We transpose $c$ and $c'$ to obtain $T_2'= \{c',c,\ldots,c\}$ (mod $2^{k-1}$). 
We claim that $m^* \mid  \Phi$ holds for the new blocks $T_1'$ and $T_2'$ after  a suitable block preserving permutation. 

The permutational sum of $T_2'$ admits the value $m/4$ mod ${m^*}$. Indeed, it has a non-exceptional structure, hence it admits the value zero mod ${m^*}$, and then one transposition between $c'$ and another element is sufficient.  %$T_1'$ can not have homogeneous exceptional structure. Indeed, then (**) would not have held for $T_1$, a contradiction.\\
 %We may suppose that  $T_1'$ is not inhomogeneous exceptional too. Otherwise $T_1'$ would have form $\{c,d,a,a,\ldots,a\}$  mod ${m^*}$, $a$ even and $c+d \equiv 2a \pmod {m^*}$. Thus instead of transposing $c$ and $c'$, we could choose $c\in T_2$ and $d\in T_1$. Their transposition is suitable since in this case $d\not\equiv c \pmod {2^{k-1}}$, moreover $d\not\equiv c' \pmod {m^*}$ because of (**). 
Thus, neither (**) holds for $T_2'$ nor has it exceptional structure. 
Hence we may suppose that $m^* \mid  \Phi$ holds for the new blocks $T_1'$ and $T_2'$. Either we are done or $\Phi \equiv m^*$ (mod $m$). In the latter case  we need a transposition in $T_2'$ between $c'$ and another element congruent to $c$ mod ${2^{k-1}}$, for which the permutation sum changes by $m^*$ mod ${m}$. Such a transposition clearly exists.

\bigskip

 %Look at the pairs $(a_i, a_i+2^{k-1}n)$ according to the braid trick. If a pair take different residues (mod $2$), then their transposition makes the permutational sum divisible by $m$ and we are done. Otherwise all elements must be odd. we create another multiset $M^*=\{b_1, b_2, \ldots, b_{2^{k-1}n}\}$ in $\mathbb{Z}_{2^{k-1}n}$, such that $b_i=\frac{a_i+a_{i+2^{k-1}n}}{2}$ holds. If $M^*$ a has zero permutational sum, that permutation gives a suitable permutation of $M$ too, as follows.

%$\sum _{i=1}^{2^{k-1}n}ib_i\equiv0$ (mod $2^{k-1}n$) $\Rightarrow \sum _{i=1}^{2^{k-1}n}i(a_i+a_{i+2^{k-1}n})\enuiv 0$ (mod $m$)
%$\Rightarrow R_1+R_2\equiv 0$ (mod $m$), thus the same holds for $\Phi$, since $S_22^{k-1}n\equiv 0$ (mod $m$) is also true.

%If $M^*$ does not have zero permutational sum, then $b_i\equiv b_j$ (mod $2^{k-1}$) holds for all $i$ and $j$. That can only happen if all but two of the elements have residue $a$ (mod $2^{k-1}$), hence we are done again in view of Lemma \ref{lumma}.\\
%\end{proof}

\bigskip

%\section{Final remarks}

%Although the same question naturally rises for all finite Abelian groups, apparently the general answer is more complex. It is easy to check for example that in the Klein group $\mathbb{Z}_2^2$, the multiset consisting of all different group elements has no zero permutational sum, whereas all other multisets have. If we take the group $\mathbb{Z}_2^3$ , all multisets have zero permutational sum. It would be interesting to have a general result.\\  

\textbf{Acknowledgment.~} I am grateful to the anonymous referees for their valuable help in improving the presentation of the paper. 

\newpage

\bibliographystyle{plain}

\begin{thebibliography}{}
\bibitem{Alon} {\sc N. Alon,} Combinatorial Nullstellensatz, {\it Combin. Prob. Comp.} {\bf 8}  (1999), 7-29.

\bibitem{Alon1} {\sc N. Alon,} Additive Latin transversals. {\it Israel J. Math.} {\bf 117} (2000), 125-130.

\bibitem{Snevily}  {\sc B. Arsovski,}  The proof of Snevily's conjecture, {\it Israel J. Math} {\bf 182} (2011), 505-508.

\bibitem{britnell} {\sc J.R. Britnell, M. Wildon,} On types and classes of commuting matrices over finite fields, \textit{J. London Math Soc.} \textbf{83} (2011), 470-492.

\bibitem{Snev} {\sc S. Dasgupta, Gy. K\'arolyi, O. Serra, B. Szegedy,} Transversals of additive Latin squares. {\it Israel J. Math.} {\bf 126} (2001), 17-28.

\bibitem{erdos}  {\sc P. Erd\H os, A. Ginzburg, A. Ziv,} Theorem in additive number theory, {\it Bull. Research Council, Israel}, {\bf 10F} (1961), 41-43.

\bibitem{abel} {\sc L. Fuchs,} Abelian groups, {\it  Budapest, Publishing house of the Hungarian Academy of Sciences} 1958. 

\bibitem {Gacs} {\sc  A. G\'acs, T. H\'eger, Z. L. Nagy and D. P\'alv\"olgyi,}
Permutations, hyperplanes and polynomials over finite fields, {\it Finite Fields Appl.} {\bf 16} (2010), 301-314.

%\bibitem {gao} {\sc W. Gao, A. Geroldinger}, Zero-sum problems in finite Abelian groups: A survey, {\it Expo. Math.} {\bf 24} (2006) 

\bibitem{Karolyi} {\sc Gy. K\'arolyi,} An inverse theorem for the restricted set addition in Abelian groups, {\it J. Algebra} {\bf 290} (2005), 557-593.

\bibitem{LN} {\sc R. Lidl, H. Niederreiter,} Finite fields, 
{\it Cambridge University Press,} (1997). 
 
\bibitem {Sn} {\sc H. S. Snevily,} The Cayley addition table of $Z_n$, {\it Amer. Math. Monthly} {\bf 106} (1999), 584-585. 

\bibitem{Vin} {\sc S. Vinatier}, Permuting the partitions of a prime, {\it J. Th. Nombres Bordeaux} {\bf 21} (2009),  455-465.  
\end{thebibliography}

\bigskip

\noindent Zolt\'an L\'or\'ant Nagy 

\noindent Department of Computer Science,

\noindent E\"otv\"os Lor\'and University, Budapest

\noindent H-1117, Budapest, P\'azm\'any P\'eter s\'et\'any 1/C

\noindent HUNGARY

\noindent {\tt e-mail: nagyzoltanlorant@gmail.com}

\end{document}